\newcommand{\QQ}{\mathbb Q}
\newcommand{\ZZ}{\mathbb Z}
\newcommand{\squ}{\preceq}
\def\out{\rm Out}
\def\ind{{\rm Ind}}
\def\res{{\rm Res}}
\def\inf{{\rm Inf}}
\def\infl{{\rm Inf}}
\def\defl{{\rm Def}}
\def\jef{{\rm Jef}}
\def\jnf{{\rm Jnf}}
\def\ten{{\rm Ten}}
\def\iso{{\rm Iso}}
\def\lin{{\rm Lin}}
\def\hom{{\rm Hom}}
\newcommand{\rest}{{\rho_k(G)}}
\newcommand{\tran}{{\tau_k(G)}}
\newcommand{\conj}{{\gamma_k(G)}}
\newcommand{\mack}{{\mbox{\rm Mack}}}
\newtheorem{thm}{Theorem}
\newtheorem{pro}[thm]{Proposition}
\newtheorem{cor}[thm]{Corollary}
\newtheorem{lem}[thm]{Lemma}
\newtheorem{rem}[thm]{Remark}
\newtheorem{defn}[thm]{Definition}
\newtheorem{ex}{Example}
\newenvironment{proof}{\noindent{\bf Proof}\ }{$\square$}
\begin{document}

\begin{frontmatter}
\title{The Dade group of Mackey functors for $p$-groups}%

\author[label1]{Olcay Coşkun}

\address{Boğaziçi Üniversitesi, Matematik Bölümü, Bebek, İstanbul, Turkey }
\address{and}
\address{Feza Gürsey Center for Physics and Mathematics, Boğazici University, Kandilli 34684 İstanbul, Turkey}
\ead{olcay.coskun@boun.edu.tr}

\begin{abstract}
We introduce endo-permutation Mackey functors and the Mackey-Dade group of a $p$-group and study their basic properties. 
We exhibit relations between the Mackey-Dade group of a finite $p$-group $P$ and the Dade groups of the Weyl groups
of all subgroups of $P$. As a result, we show that the Mackey-Dade group tensored with $\QQ$ over $\ZZ$ is the kernel of the linearization map
for Mackey functors, introduced by the author. 
\end{abstract}

\begin{keyword} 
endo-permutation modules\sep Dade group\sep Mackey functor\sep ring of subquotients.
\MSC 20C20\sep 19A22
\end{keyword}
\end{frontmatter}
\section{Introduction} 

In \cite{D1}, Dade introduced the family of endo-permutation $kP$-modules as an efficient way of studying 
representations of $p$-groups over a field of characteristic $p$. As shown by Dade, the class of endo-permutation modules
behaves nicely under the usual operations of taking duals, taking direct summands, restriction and inflation. He also 
constructed a deflation map on endo-permutation modules, now called Dade's slash construction.
A framework to study endo-permutation modules is the Dade group.

The structure of the Dade group, and hence the classification of all endo-permutation modules, is determined by Bouc in 
\cite{BDade}, where results of Alperin, Bouc, Bouc-Th\'{e}venaz, Bouc-Mazza and Carlson-Th\'{e}venaz are used. We refer
to \cite{B96} and references therein for a complete account of the classification.

After the classification problem is solved, there appear two kinds of generalizations of endo-permutation modules. In \cite{L} 
and in \cite{U}, there appear two attempts to generalize the notion of an endo-permutation module to an arbitrary finite group $G$. 
Whereas in \cite{LM}, the Dade group of a fusion system over a finite $p$-group is constructed.

In this paper, we introduce another of generalization of endo-permutation $kP$-modules by defining endo-permutation
Mackey functors. We show that it is possible to construct a group structure on the class of endo-permutation Mackey functors
for $P$, called the Mackey-Dade group of $P$. It turns out that the Mackey-Dade group shares some of the interesting 
properties of the Dade group. For example, we show that the torsion-free rank of the Mackey-Dade group of $P$ is the 
number of conjugacy classes of non-cyclic subquotients, whereas, by a result of Bouc and Th\'{e}venaz, \cite[Theorem A]{BT}, 
that of the Dade group is the number of conjugacy classes of non-cyclic subgroups of $P$. Note that the result for the Dade 
group also follows from \cite[Theorem D]{BT}, where it is shown that the Dade group tensored with $\QQ$ over $\ZZ$ is the kernel of 
the well-known linearization map from the Burnside ring to the ring of rational characters. We obtain the result on the rank of the 
Mackey-Dade group by constructing an isomorphism between this group and the product of all Dade groups of Weyl groups of all 
subgroups of $P$. Existence of such an isomorphism also suggests a relation between the Mackey-Dade group, the ring of 
subquotients of $P$, and the lineralization map for Mackey functors, both introduced by the author in \cite{Clin}. We explain this 
relation in Section \ref{sec:DvsL}.

It is shown in \cite{BT} that, after extending the coefficients to a field of characteristic zero, the Dade group functor becomes a
$p$-biset functor. Unfortunately, we do not have a version of this result for the Mackey-Dade group. The difficulty is with the
tensor induction map. 

The organization of the paper is as follows. In Section \ref{sec:prelim}, we collect necessary preliminaries on Mackey functors. 
Section \ref{sec:dual} introduces a new duality functor on the category of Mackey functors which is used to determine inverse 
elements in the Mackey-Dade group of a finite $p$-group. The definition and basic properties of endo-permutation Mackey functors 
are discussed in
Section \ref{sec:epmf} where we also introduce the Mackey-Dade group. The last two sections relate the Mackey-Dade group 
to known constructions. In Section \ref{sec:dd}, we show that the Mackey-Dade group of $P$ is a product of Dade groups of the
groups $N_P(Q)/Q$ as $Q$ runs over conjugacy classes of subgroups of $P$ and in Section \ref{sec:DvsL}, we show that,
after extending the coefficients to a field of characteristic zero, the Mackey-Dade group is isomorphic to the kernel of the
linearization map introduced in \cite{Clin}.

\section{Preliminaries}\label{sec:prelim}
In this section, we collect some results on Mackey functors that will be used throughout the paper. We refer to 
\cite{BGreen}, \cite{BNon} and \cite{TW} for details and proofs. Throughout the paper, we assume that $k$ is a field and
all modules are finitely generated left modules over the given algebra, unless otherwise specified, and $G$ denotes a finite group. 
Note that although certain constructions and results are valid also over a commutative ring, our main results are only valid over fields. 
Thus we work over a field to simplify statements.

\subsection{Mackey functors}
Let $G$ be a finite group and $k$ be a field. Consider the free $k$-algebra on variables $c^g_H, r^H_K,
t_K^H$ where $K\le H\le G$ and $g\in G$. We define the \textit{Mackey algebra} $\mu_k(G)$ for $G$ over $k$ as the quotient
of this free algebra by the ideal generated by the following six relations:

\smallskip

\noindent {\bf (1)} $c^h_H = r_H^H = t_H^H$ for any $H\le G$ and any $h\in H$.

\smallskip

\noindent {\bf (2)} $ c^{g^\prime}_{{}^{g}H}c^{g}_H = c^{g^\prime g}_H \;\;\; {\rm and}\;\;\; r^K_L r^H_K = r^H_L
\;\;\; {\rm and} \;\;\; t_K^H t_L^K = t_L^H$ for any $L\le K\le H\le G$ and $g,g^\prime \in G$.

\smallskip

\noindent {\bf (3)}  $c^g_K r^H_K = r^{{}^gH}_{{}^gK} c^g_H \;\;\; {\rm and} \;\;\;
c^g_H t_K^H = t_{{}^gK}^{{}^gH} c^g_K$ for any $g\in G$ and $K\le H\le G$.

\smallskip

\noindent  {\bf (4)} $r^H_J t_K^H = \sum_{x\in J\backslash H/K}t_{J\cap {}^xK}^J c^x_{J^x\cap K} r^K_{J^x \cap K} \;\;\;
{\rm for} \;\;\; J\le H\ge K$  (Mackey Relation)

\smallskip

\noindent {\bf (5)} $\sum_{H\le G} r^H_H = 1$

\smallskip

\noindent {\bf (6)} All other products of generators are zero.

It is known that, letting $H$ and $K$ run over the subgroups of $G$ and letting $g$ run over the double coset
representatives $HgK \subset G$ and letting $L$ run over representatives of the subgroups of $H^g\cap K$ up to conjugacy,
the elements $t_{{}^gL}^H\, c^g_L \, r^K_L$ run (without repetitions) over the elements of an $k$-basis for the
Mackey algebra $\mu_k(G)$ (cf.\cite[Section 3]{TW2}).

We denote by $\rest$, called the \textit{restriction algebra} for $G$ over $k$, the subalgebra of the Mackey algebra
generated by $c^g_H$ and $r^H_K$ for $K\le H\le G$ and $g\in G$. We denote by $\tran$ the \textit{transfer algebra} for
$G$ over $k$, defined as the subalgebra generated by $c^g_H$ and $t^H_K$ for $K\le H\le G$ and $g\in G$.
The \textit{conjugation
algebra}, denoted $\conj$, is the subalgebra generated by the elements $c^g_H$. When there is no ambiguity, we write
$\mu = \mu_k(G)$, and $\rho = \rest$ and $\tau = \tran$ and $\gamma = \conj$. The restriction algebra $\rho$ has
generators $c^g_J \, r_J^K$, the transfer algebra $\tau$ has generators $ t_{^gJ}^Kc^g_J$ and the conjugation algebra
$\gamma$ has generators $c^g_J$.

We refer to \cite{BGreen}, \cite{C1}, \cite{Dr}, \cite{TW} and \cite{TW2} for further theoretical background on Mackey functors. 
In this paper, we use any of the following three definitions of a Mackey functor for $G$ over $k$: A Mackey functor $M$ for $G$ over $k$ is 
\begin{enumerate}
\item[(i)] a functor from the category of subgroups of $G$ to the category $k$-mod of finite-dimensional $k$-vector spaces,
\item[(ii)] a functor from the category of finite $G$-sets to $k$-mod,
\item[(iii)] a finitely generated left $\mu_k(G)$-module.
\end{enumerate} 

In \cite{TW}, Thévenaz and Webb constructed an anti-automorphism $(-)^*$ of the Mackey algebra $\mu_k(G)$ which allow us to 
define a right $\mu_k(G)$-module structure on Mackey functors. With this right module structure, we form, in the category 
Mack$(G)$ of Mackey functors, the tensor product functor $-\widehat\otimes_{\mu_k(G)} -$ and the hom functor 
Hom$_{\mbox{\rm Mack}(G)}(-, -)$. These functors satisfies the usual properties of the tensor product and the hom functor for 
module categories. We refer to \cite[Chapter 1]{BGreen} for further details. 

For our purposes, we need multiplicative functors between categories of Mackey functors preserving certain properties (e.g. being 
endo-permutation). It turns out that the usual 
functors of induction, inflation, and deflation defined in \cite{TW} are not suitable. A general theory of 
functors between categories of Mackey functors is described by Bouc in \cite{BGreen} in details, together with their left and 
right adjoints. Fortunately, two of the multiplicative functors that we are looking for are among these functors.
Below, we include the definition of the multiplicative inflation and the multiplicative deflation from \cite{BGreen} together
with the properties of these functors that are used in this paper. Note that these are not naturally equivalent to the usual
inflation and deflation functors introduced in \cite{TW}.

Following \cite{TW}, for $H\le G$ and $N\unlhd G$, we write $\res^G_H$ and $\infl_{G/N}^G$ for the ordinary restriction 
and 
inflation functors, respectively. We also have the following two functors. For $N\unlhd G$, define the multiplicative deflation 
functor (cf. \cite[Example 9.9.2]{BGreen})
\[
\jef^G_{G/N}: \mack(G)\rightarrow \mack(G/N)
\]
so that for a Mackey functor $M$ for $G$ and $N\le K\le G$, we have
\[
\jef^G_{G/N}(M)(K/N) = M(K)\Big/ \Big( \sum_{L}\mbox{\rm Im}(t_L^K: M(L)\rightarrow M(K)) \Big)
\] 
where $L$ runs over all subgroups of $K$ not containing $N$. The transfer, restriction and conjugation maps are the ones 
induced by those of $M$ on the corresponding quotients. 

In the reverse direction, we denote by
\[
\jnf^G_{G/N} :\mack(G/N)\rightarrow \mack(G)
\]
the functor $\iota_{G/N}^G$ described in \cite[Section 9.9.3]{BGreen}. In general, it is difficult to determine the evaluations of the Mackey functors $\jnf_{G/N}^G M$. However, by the 
remark at the end of Section 9.9 in \cite{BGreen}, we have
\begin{equation}\label{eqn:jnf1}
\jnf_{G/N}^G M(1) = M(N/N).
\end{equation}

Now let $G$ and $G^\prime$ be groups and $\lambda:G\rightarrow G^\prime$ be an isomorphism. Then we denote by
$\iso_{G,G^\prime}^\lambda$ the functor that transports Mackey functors for $G^\prime$ to Mackey functors for $G$ via
the isomorphism $\lambda$.

A final operation on Mackey functors that is used throughout the paper is the bar construction. Let $M$ be a Mackey functor
for $G$ over a field $k$ and $H\le G$. Following Th\'evenaz and Webb \cite[Section 2]{TW}, we define the $kN_G(H)/H$-
module $\overline M(H)$ by
\[
\overline M(H) = M(H)\Big/ \Big(\sum_{L< H} \mbox{\rm Im}\, t_L^H: M(L)\rightarrow M(H)\Big).
\]
Here the action of $N_G(H)/H$ is given by the conjugation maps $c^g_{N_G(H)/H}$ for $g\in N_G(H)/H$, (cf. \cite[Proposition 3.4]{TW}).
Via this definition, we associate a sequence $\overline M=(\overline M(H))_{H\le G}$ of modules to any Mackey functor $M$. 
It is clear that the sequence $\overline M$ is a $\gamma_k(G)$-module, and that $\overline M(1) = M(1)$ by definition.

On the other hand, by the remark at the end of Section 10.8 in \cite{BGreen}, we have 
\begin{equation}\label{eqn:barLU}
\overline M(H) = (\jef^{N_G(H)}_{N_G(H)/H}\res^G_{N_G(H)} M)(H/H).
\end{equation}
The following proposition describes the compositions of the above functors with the bar construction. Note that the proof
follows from the above isomorphism and the properties of Bouc's general construction $\mathcal L$ given in \cite{BGreen}.

\begin{pro}\mbox{\rm{(Bouc)}}\label{pro:barres} \label{pro:barjef}\label{pro:barjnf} 
Let $G$ be a finite group, $N\unlhd G$ and $H\le G$. Let $M$ be a Mackey functor for $G$ over $k$ and $M^\prime$ be a 
Mackey functor for $G/N$ over $k$.
\begin{enumerate}
\item Given $H\le K\le G$, there is an isomorphism
\[
\overline{\res^G_K M}(H) \cong \res^{N_G(H)/H}_{N_K(H)/H} (\overline M(H))
\]
of $kN_K(H)/H$-modules.
\item Suppose $N\le H\le G$. Then there is an isomorphism of $kN_G(H)/H$-modules
\[
\overline{\jef^{G}_{G/N} M} (H/N) \cong \overline M (H) 
\]
where we identify the groups $N_G(H)/H$ and $(N_G(H)/N)/(H/N)$ via the canonical quotient map.
\item There is an isomorphism
\[
\overline{\jnf_{G/N}^G M^\prime} (H) \cong \inf_{N_G(H)/N_{HN}(H)}^{N_G(H)/H}\big(\iso^\lambda(\overline{M^\prime}(HN/N))\big)
\] 
of $kN_G(H)/H$-modules where $\iso^\lambda$ denotes the transport of structure map induced by the canonical isomorphism
\[
\lambda: \frac{(N_G(H)N)/N}{(HN)/N} \rightarrow \frac{N_G(H)/H}{(N_G(H)\cap N)H/H}.
\]
\end{enumerate}
\end{pro}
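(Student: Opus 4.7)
My overall plan is to invoke the description
\[
\overline M(H) = \bigl(\jef^{N_G(H)}_{N_G(H)/H}\res^G_{N_G(H)} M\bigr)(H/H)
\]
from (\ref{eqn:barLU}) and reduce each part to a composition identity for the functors $\jef$, $\res$ and $\jnf$, using the standard compatibilities of Bouc's $\mathcal L$ functor from \cite{BGreen}.

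Parts (1) and (2) I would handle by direct definition chasing. For (1), restriction along $K\le G$ leaves the evaluation $M(H)$ and the transfer maps $t_L^H$ unchanged, so the quotient defining $\overline{\res^G_K M}(H)$ coincides with $\overline M(H)$ as a $k$-vector space; the natural $k N_G(H)/H$-action is simply remembered as a $k N_K(H)/H$-action, which is precisely the claimed restriction. For (2), unwinding $\overline{\jef^G_{G/N}M}(H/N)$ gives $M(H)$ modulo two families of transfer images: $t_L^H M(L)$ for $L\le H$ with $L\not\supseteq N$, coming from the definition of $\jef^G_{G/N}$, together with $t_L^H M(L)$ for $N\le L<H$, coming from the outer bar. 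These exhaust all proper subgroups $L<H$, so the quotient is $\overline M(H)$, and the module identification is immediate via the canonical isomorphism $(N_G(H)/N)/(H/N)\cong N_G(H)/H$.

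Part (3) is the main obstacle, since $\jnf_{G/N}^G M^\prime$ has no closed-form evaluation beyond (\ref{eqn:jnf1}). Here I would use a Mackey-type decomposition of $\res^G_{N_G(H)}\jnf_{G/N}^G M^\prime$ arising from Bouc's general framework: after applying $\jef^{N_G(H)}_{N_G(H)/H}$ and evaluating at $H/H$, the composite should collapse to an inflation along the canonical surjection $N_G(H)/H \to N_G(H)/N_{HN}(H)$, applied to the analogous bar construction for $M^\prime$ at the subgroup $HN/N$ of $G/N$, with the two Weyl-type quotients identified via the isomorphism $\lambda$. The key verifications are that the subquotients $((N_G(H)N)/N)/((HN)/N)$ and $(N_G(H)/H)/((N_G(H)\cap N)H/H)$ correspond under $\lambda$ as claimed (which reduces to the identity $N_{HN}(H) = (N_G(H)\cap N)H$), and that the two $k N_G(H)/H$-module structures agree. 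The base case $H=1$ reduces to (\ref{eqn:jnf1}), which I would use as a sanity check on the overall calculation.
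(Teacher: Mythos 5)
Your approach matches the paper's: the paper also gives no explicit proof but remarks that everything follows from the identification
\[
\overline M(H) = \bigl(\jef^{N_G(H)}_{N_G(H)/H}\res^G_{N_G(H)} M\bigr)(H/H)
\]
together with the general properties of Bouc's $\mathcal L$ construction in \cite{BGreen}. For parts (1) and (2) you actually fill in the details, which the paper leaves implicit, and both arguments are correct. In (1) the point is precisely that for $H\le K$ every proper subgroup of $H$ is already a subgroup of $K$, so the vector space $\overline{\res^G_K M}(H)$ is literally $\overline M(H)$ and one is only forgetting part of the Weyl group action. In (2) the two families of transfer ideals $\{t_L^H : L\le H,\ L\not\supseteq N\}$ and $\{t_L^H : N\le L< H\}$ do jointly exhaust $\{t_L^H : L< H\}$ when $N\le H$, and the Weyl group identification uses $N_{G/N}(H/N)=N_G(H)/N$; your verification is sound.

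For part (3) your write-up stops at a plan: you assert that after applying $\jef^{N_G(H)}_{N_G(H)/H}$ and evaluating at $H/H$ the composite \emph{should collapse} to the stated inflation, but you do not actually establish the Mackey-type decomposition of $\res^G_{N_G(H)}\circ\jnf^G_{G/N}$ or the compatibility of $\jef$ with $\jnf$ on overlapping subquotients that would make this collapse happen. That is the genuinely nontrivial step, since $\jnf$ has no closed-form evaluation and is defined as a right adjoint; one needs the explicit commutation formulas for Bouc's $\mathcal L$-construction (the analogues of the ordinary Mackey and Frobenius formulas in \cite{BGreen}) to push through. Your reduction of the $\lambda$-identification to the group-theoretic identity $N_{HN}(H)=(N_G(H)\cap N)H$ is correct and is indeed the right key, and the sanity check at $H=1$ against (\ref{eqn:jnf1}) is consistent. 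Since the paper itself merely cites Bouc for all three parts, your proposal is at least as complete as the paper's argument; but as a self-contained proof, part (3) still requires writing out the relevant $\mathcal L$-compatibilities rather than asserting they yield the stated isomorphism.
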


\section{Twin-dual construction}\label{sec:dual}
In this section, we introduce the twin-dual of a Mackey functor $M$ on a finite group $G$, as a variation of the duality 
$M\mapsto \hom_k(M,k)$.

First recall the twin functor construction of Th\'evenaz \cite{T}. Given a Mackey functor $M$ for $G$, the \textit{twin functor} $TM$ of $M$ is the Mackey functor such that for any subgroup $Q\le G$, we have
\[
TM (Q) = \Big(\bigoplus_{K\le Q} \overline M(K) \Big)^Q
\]
where $(?)^Q$ denotes the submodule consisting of the $Q$-fixed points under the conjugation action of $Q$ coming from 
the Mackey functor structure. See Section 9 of \cite{TW2} for the action of the Mackey algebra. In this paper, we slightly modify 
this construction to define the twin-dual functor of a Mackey functor.
Given a $\gamma$-module $C$, write $C^*$ for its dual defined by $C^*(Q) = (C(Q))^* = \hom_k(C(Q),k)$ for any subgroup $Q$
of $G$. Then we define the \textit{twin-dual} $M^\circ$ of a Mackey functor $M$ to be the Mackey functor mapping $Q\le G$
to the $k$-module 
\[
M^\circ (Q) = \Big(\bigoplus_{K\le Q} (\overline M(K))^* \Big)_Q
\]
where $(X)_Q$ denotes the $Q$-coinvariants of $X$, i.e. the largest quotient of $X$ on which $Q$ acts trivially. The action of the Mackey algebra
is induced by its action on the Mackey functor $M$. Furthermore, by
Theorem 3.4.1 and Proposition 3.6 in \cite{C1}, we have
\begin{equation}\label{eqn:twin-dual}
\overline{M^\circ} (Q) = (\overline M(Q))^* \,\,\, \mbox{\rm for all} \,\,\, 1\le Q\le G.
\end{equation}
The following result clarifies the relation between the twin-dual and the dual of a Mackey functor. The proof of the result follows
easily from the duality theorems in \cite[Section 4]{C1} and by \cite[Theorem 12.2]{T}. Recall that, by our standing assumption,
any Mackey functor is a finite-dimensional $k$-vector space.
\begin{pro}
Let $M$ be a Mackey functor for $G$ over the field $k$. Then there is an isomorphism 
\[
M^\circ \cong (TM)^*
\]
of Mackey functors. In particular, if $k$ has characteristic not dividing the order of $G$, the twin-dual $M^\circ$ is isomorphic to 
the dual $M^*$ of $M$.
\end{pro}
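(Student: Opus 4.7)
The plan is to verify the isomorphism $M^\circ\cong (TM)^*$ pointwise first and then check naturality, and finally invoke a semisimplicity result of Th\'evenaz for the second assertion.

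First I would establish the pointwise isomorphism. For any subgroup $Q\le G$ set $V_Q = \bigoplus_{K\le Q}\overline{M}(K)$, which is a finite-dimensional $kQ$-module via the conjugation action coming from the Mackey structure. The standard linear-algebra duality between invariants and coinvariants of a finite-dimensional $kQ$-module yields a natural isomorphism $(V_Q^Q)^*\cong (V_Q^*)_Q$. Since $V_Q^* = \bigoplus_{K\le Q}\overline{M}(K)^*$, the right-hand side is exactly $M^\circ(Q)$ by definition, while the left-hand side is $(TM(Q))^*=(TM)^*(Q)$. This already gives the required isomorphism of $k$-vector spaces at every $Q$.

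Next I would check that this pointwise isomorphism is in fact an isomorphism of Mackey functors, i.e.\ that it commutes with the restriction, transfer and conjugation maps. The Mackey structure on $TM$ (respectively $M^\circ$) is the one given in Section~9 of \cite{TW2} (respectively the one induced from the structure on $M$ via the duality on $\gamma$-modules). The compatibility amounts to saying that under the duality $(V^Q)^*\cong (V^*)_Q$ the maps induced by $t_L^K$, $r^K_L$ and $c^g_K$ on the $TM$ side go to the duals of the maps defining $M^\circ$. This is exactly the content of the duality theorems in \cite[Section~4]{C1}, combined with the formula $\overline{M^\circ}(Q)=(\overline{M}(Q))^*$ recalled in \eqref{eqn:twin-dual}; I would simply match the two descriptions of the structure maps in the bar-construction model and observe that the dualization interchanges invariants with coinvariants term-by-term. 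The main obstacle, as far as I can see, is precisely this bookkeeping step: ensuring that the signs, conjugations and double-coset summands appearing in the Mackey structure of $TM$ match up under transposition with those appearing in $M^\circ$. Once this is established for one structure map, the same argument applies to the others.

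For the final ``in particular'' statement, assume $\mathrm{char}(k)$ does not divide $|G|$. Then the Mackey algebra $\mu_k(G)$ is semisimple, and by \cite[Theorem~12.2]{T} the twin functor construction is naturally equivalent to the identity, i.e.\ $TM\cong M$ for every Mackey functor $M$. Dualizing and combining with the first part gives
\[
M^\circ \cong (TM)^* \cong M^*,
\]
which is the desired conclusion.
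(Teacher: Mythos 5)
Your proposal is correct and follows essentially the same route as the paper, which itself offers only a citation-based proof (referring to the duality theorems of \cite[Section 4]{C1} and to \cite[Theorem 12.2]{T}). You make the pointwise linear algebra explicit --- the identification $\bigl(V_Q^{\,Q}\bigr)^* \cong \bigl(V_Q^*\bigr)_Q$ for the finite-dimensional $kQ$-module $V_Q=\bigoplus_{K\le Q}\overline M(K)$, which yields $(TM)^*(Q)\cong M^\circ(Q)$ directly from the definitions --- and you then invoke the same two references the paper relies on: \cite[Section 4]{C1} to transport the pointwise dualities into a genuine map of Mackey functors compatible with the restriction/transfer/conjugation maps built from double-coset formulas, and \cite[Theorem 12.2]{T} together with semisimplicity of $\mu_k(G)$ when $\mathrm{char}\,k\nmid|G|$ for the ``in particular'' clause. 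The one place you honestly flag as the bottleneck (matching the structure maps under the invariants/coinvariants interchange) is exactly the step the paper also defers to \cite{C1}, so your proposal is neither more nor less complete than the original; it simply records the elementary duality computation that the paper treats as understood.
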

\section{Endo-permutation Mackey functors}\label{sec:epmf}

For the rest of the paper, let $k$ be an algebraically closed field of characteristic $p>0$ and $P$ be a finite $p$-group. 
In this section, we introduce endo-permutation Mackey functors for finite $p$-groups and weakly-permutation Mackey functors for 
any finite group.

\begin{defn}
A Mackey functor $M$ for $G$ is called a \textit{\textbf{weakly-permutation Mackey functor}} if for any subgroup $H$ of $G$,
the $kN_G(H)/H$-module $\overline{M}(H)$ is a permutation module. In particular, $\overline{M}(1) = M(1)$ is a permutation
$kG$-module.
\end{defn}

In \cite[Definition 3.2]{BNon}, Bouc defines a permutation Mackey functor to be a Mackey functor isomorphic to $B_X$ for some
$G$-set $X$. One can show that any permutation Mackey functor is a weakly-permutation Mackey 
functor, for example by \cite[Lemma 5.10]{Bproj}. However, in general, there are weakly-permutation Mackey functors which are
not of the form $B_X$. Indeed, any permutation Mackey functor is projective and hence it is sufficient to exhibit a non-projective
weakly-permutation Mackey functor. As an example, consider the Mackey functor
\[
M:= \bigoplus_{H\le_G G} S_{H,1}^\mu
\]
where the sum is over conjugacy classes of subgroups of $G$ and $S_{H,1}^\mu$ denotes the simple Mackey functor with minimal 
group $H$ and $S_{H,1}^\mu(H) \cong k$. It can be shown that for any subgroup $H\le G$, we have $\overline{M}(H) \cong k$ as 
$kN_G(H)/H$-modules, but $M$
is not projective unless the characteristic of $k$ does not divide the order $|N_G(H)/H|$ for each $H$ in $G$. Therefore, when
$k$ has characteristic $p$, and $G$ has order divisible by $p$, the functor $M$ is an example of a
non-projective weakly-permutation Mackey functor which is not a permutation Mackey functor.

We are now ready to introduce the main concept of this paper.
\begin{defn}
Let $P$ be a finite $p$-group and $k$ be a field of characteristic $p$. 
A Mackey functor $M$ for $P$ is called an \textbf{endo-permutation Mackey functor} if 
the Mackey functor $M^\circ\widehat\otimes M$ for $P$ is a weakly-permutation Mackey functor. 
\end{defn}

Note that, by \cite[Proposition 1.9.1]{BGreen}, there is an isomorphism of Mackey functors $M^\circ\widehat\otimes M \cong 
M^\circ\widehat\otimes M$. Thus, equivalent to the above definition, a Mackey functor $M$ is an endo-permutation  Mackey functor
if $M^\circ\widehat\otimes M$ is a weakly-permutation Mackey functor.

It is possible to characterize endo-permutation Mackey functors in terms of endo-permutation
$kP$-modules, introduced by Dade in \cite{D1}. Given a finite $p$-group $P$ and a field $k$ of characteristic $p$, a $kP$-
module $M$ is called an \textit{endo-permutation} $kP$-module if End$_k(M)$ is a permutation $kP$-module. Here End$_k(M)$ is
a $kP$-module via the action given by
\[
(g\cdot \phi) (m) = g\cdot \phi(g^{-1}\cdot m)
\]
for all $g\in G, \phi \in \mbox{\rm End}_k(M)$ and $m\in M$. Note that, putting $M^* = \hom_k(M,k)$, we have 
End$_k(M) = \hom_k(M,M) \cong M^*\otimes M$.
\begin{lem}
Let $M$ be a Mackey functor for $P$ over $k$. Then the following statements are equivalent.
\begin{enumerate}
\item $M$ is an endo-permutation Mackey functor.
\item For each subgroup $Q$ of $P$, the $kN_P(Q)/Q$-module $\overline M(Q)$ is an endo-permutation module.
\end{enumerate}
\end{lem}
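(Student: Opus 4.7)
The plan is to reduce the equivalence to a single compatibility statement relating the bar construction to the tensor product of Mackey functors, and then to recognise the result of that compatibility as exactly the classical endo-permutation condition applied subgroup-by-subgroup.

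First I would unpack both sides. By definition, $M$ is endo-permutation iff $M^\circ\widehat\otimes M$ is weakly-permutation, i.e.\ iff $\overline{M^\circ\widehat\otimes M}(Q)$ is a permutation $kN_P(Q)/Q$-module for every $Q\le P$. On the other hand, the classical definition recalled just before the lemma says that $\overline M(Q)$ is an endo-permutation $kN_P(Q)/Q$-module iff $\mathrm{End}_k(\overline M(Q)) \cong (\overline M(Q))^*\otimes_k \overline M(Q)$ is a permutation module. So the whole lemma follows once one establishes, for every subgroup $Q\le P$, an isomorphism
\[
\overline{M^\circ \widehat\otimes M}(Q) \;\cong\; (\overline M(Q))^* \otimes_k \overline M(Q)
\]
of $kN_P(Q)/Q$-modules.

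For this, the key intermediate step is the compatibility of the bar construction with the tensor product of Mackey functors, namely
\[
\overline{M_1 \widehat\otimes M_2}(Q) \;\cong\; \overline{M_1}(Q)\otimes_k \overline{M_2}(Q)
\]
as $kN_P(Q)/Q$-modules, for any two Mackey functors $M_1,M_2$ on $P$. Using the description \eqref{eqn:barLU}, this reduces to the fact that $\res^P_{N_P(Q)}$ and $\jef^{N_P(Q)}_{N_P(Q)/Q}$ both commute with the tensor product of Mackey functors (the multiplicative deflation $\jef$ is designed in Bouc's framework in \cite{BGreen} precisely so as to be multiplicative), combined with the elementary observation that the evaluation of a tensor product of Mackey functors at the trivial subgroup is the tensor product over $k$ of their evaluations at the trivial subgroup. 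Once this lemma is in hand, I would apply it with $M_1=M^\circ$, $M_2=M$, and invoke the identity $\overline{M^\circ}(Q) = (\overline M(Q))^*$ from \eqref{eqn:twin-dual} to obtain the displayed isomorphism.

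The main obstacle is the tensor-product/bar compatibility; everything else is bookkeeping. Verifying that $\jef$ respects $\widehat\otimes$ requires either citing the relevant multiplicativity statement from \cite{BGreen} or checking directly on the quotient presentation of $\jef$ that transfers in the denominator are compatible with the diagonal action on the tensor product (this uses the fact that $t^K_L$ on a tensor product of Mackey functors factors as $t^K_L\otimes t^K_L$ followed by a multiplication, and that if a subgroup $L$ fails to contain $N$ then the same holds in either tensor factor). Once one has the tensor-product compatibility of $\jef$ and the analogous (and well-known) compatibility of $\res$, the equivalence in the lemma is immediate.
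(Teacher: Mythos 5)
Your proposal is correct and follows essentially the same route as the paper: both reduce the lemma to the compatibility $\overline{M_1\widehat\otimes M_2}(Q)\cong\overline{M_1}(Q)\otimes_k\overline{M_2}(Q)$ (the paper simply cites \cite[Lemma 10.8.5]{BGreen} for this, which is the multiplicativity statement you allude to) together with the identity $\overline{M^\circ}(Q)=(\overline M(Q))^*$ from \eqref{eqn:twin-dual}.
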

\begin{proof}
By \cite[Lemma 10.8.5]{BGreen}, we have
\[
\overline{M^\circ\widehat\otimes M}(Q) \cong \overline{M} (Q)\otimes \overline{M^\circ}(Q)
\]
where the tensor product on the right hand side is the usual tensor product over $k$ of $kN_P(Q)/Q$-modules. 
Also by the isomorphism given in (\ref{eqn:twin-dual}), we have $\overline{M^\circ}(Q) = (\overline{M}(Q))^*$. Therefore, 
we get 
\[
\overline{M^\circ\widehat\otimes M}(Q) \cong (\overline{M}(Q))^*\otimes \overline{M}(Q).
\]
Now the result follows from the definition of an endo-permutation module.
\end{proof}

As remarked in \cite[p. 1874]{TW}, the Mackey algebra is a finite dimensional $k$-algebra and hence Krull-Schmidt Theorem
applies and we have projecitve modules, simple modules, indecomposable modules etc. Also, by \cite{S} (cf. \cite[Section 11]{TW}), there
is a theory of vertices and sources for projective and simple Mackey functors. Now these remarks and the above result 
allow us to define a capped endo-permutation Mackey functor.

\begin{defn}
An endo-permutation Mackey functor $M$ for $P$ is called \textit{\textbf{capped}} if for each subgroup $Q\le P$, the 
endo-permutation $kN_P(Q)/Q$-module $\overline M(Q)$ is capped.
\end{defn}

Before looking at the properties of endo-permutation Mackey functors, we include some examples.

\begin{ex}
Any weakly-permutation Mackey functor $M$ for $P$ over $k$ is also an endo-permutation Mackey functor. Indeed, if $M$ is 
weakly-permutation, then the twin-dual functor $M^\circ$ is also weakly-permutation by Equation (\ref{eqn:twin-dual}), since the 
dual of a permutation module is also a permutation module. Hence $M^\circ\widehat\otimes M$ is  also a weakly-permutation Mackey 
functor. Therefore $M$ is an endo-permutation Mackey functor. 
\end{ex}

\begin{ex}
An example of a capped endo-permutation Mackey functor is the Burnside functor $kB^P$. Given a subgroup $Q\le P$,
the evaluation $kB^P(Q)$ of the Burnside functor at $Q$ is the $k$-linear extension $k\otimes B(Q)$ of the Burnside group $B(Q)$
of $Q$. Here the Burnside group $B(Q)$ is the Grothendieck group of the category of $Q$-sets with disjoint union as the 
addition. The action of the Mackey algebra is induced by the usual functors between categories of sets with a group action. For 
example, if $R\le Q\le P$, then $r^Q_R$ act as the restriction map from the category of $Q$-sets to the category of $R$-sets.

Now, by \cite[Proposition 6.2]{T}, we have $\overline{kB^P}(Q) \cong k$ for each $Q\le P$. Also, by \cite[Corollary 8.9]{TW}, the 
Burnside 
functor 
$kB^P$ is indecomposable, isomorphic to the projective cover $\mathcal P_{P,1}$ of the simple functor $S_{P,1}$. Thus the 
Burnside functor $kB^P$ is an indecomposable capped endo-permutation Mackey functor.
\end{ex}
For the rest of the paper, we only consider capped endo-permutation Mackey functors and hence capped endo-permutation
modules. Therefore, for the rest of the paper, we assume that all endo-permutation Mackey functors and all endo-permutation 
modules are capped. With this assumption, since the zero module is not a capped endo-permutation $kP$-module, for any finite $p$-group $P$,
the following holds.

\begin{cor}
Let $M$ be a capped endo-permutation Mackey functor for $P$ over $k$. Then $\overline M(Q)$ is non-zero for each 
subgroup $Q\le P$.
\end{cor}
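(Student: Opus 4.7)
The plan is to simply unwind the definitions established in the preceding lemma and in the definition of a capped endo-permutation Mackey functor. First I would invoke the lemma characterizing endo-permutation Mackey functors: since $M$ is an endo-permutation Mackey functor for $P$, it follows that for every subgroup $Q \le P$ the $kN_P(Q)/Q$-module $\overline M(Q)$ is an endo-permutation module in Dade's original sense. The additional hypothesis that $M$ is capped then upgrades this to the statement that each $\overline M(Q)$ is, in fact, a capped endo-permutation $kN_P(Q)/Q$-module.

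The second ingredient is a standard fact about capped endo-permutation modules over a $p$-group: by definition, a capped endo-permutation module admits an indecomposable direct summand with vertex equal to the full group, or equivalently, the trivial module appears as a summand of its endomorphism permutation module. The zero module, whose endomorphism algebra is zero, cannot meet either criterion, so every capped endo-permutation module is automatically non-zero. This observation is precisely what the authors record in the sentence immediately preceding the corollary.

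Combining these two steps, for each $Q \le P$ the module $\overline M(Q)$ is a capped endo-permutation $kN_P(Q)/Q$-module, and hence non-zero. There is no real obstacle; the proof is essentially definitional. The reason the statement is worth isolating as a corollary is that in the later sections, when building the Mackey--Dade group and applying Proposition \ref{pro:barres} to compare $\overline M(Q)$ across restriction, $\jef$ and $\jnf$, one repeatedly needs the non-vanishing of the bar evaluations in order to form equivalence classes modulo permutation Mackey functors and to invoke the Krull--Schmidt decomposition on each $\overline M(Q)$.
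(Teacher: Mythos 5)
Your proposal is correct and matches the paper's own (very brief) justification: the definition of a capped endo-permutation Mackey functor forces each $\overline M(Q)$ to be a capped endo-permutation $kN_P(Q)/Q$-module, and the zero module is never capped, so each $\overline M(Q)\neq 0$. The lemma characterizing endo-permutation Mackey functors is not even needed, since the cappedness of $\overline M(Q)$ is built directly into the definition of a capped endo-permutation Mackey functor.
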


Next we look at some properties of capped endo-permutation Mackey functors. Let $M$ and $N$ be capped endo-permutation 
Mackey functors for $P$ and $M^\prime$ be a direct summand of $M$. Then, in general, $M^\prime$ is not a capped 
endo-permutation Mackey functor since we may have $\overline{M^\prime}(Q) = 0$ for some subgroup $Q\le P$. However, since
any direct summand of an endo-permutation module is again endo-permutation, if $\overline{M^\prime} (Q)$ is capped for each 
subgroup $Q\le P$, then $M^\prime$ is a capped endo-permutation Mackey functor. For simplicity, a direct summand $M^\prime$ 
of $M$ such that $\overline{M^\prime} (Q)\neq 0$ for each subgroup $Q\le P$ is called \textit{\textbf{primordial}}. Thus, any primordial 
summand of a capped endo-permutation Mackey functor is a capped ando-permutation Mackey functor. 

On the other hand, since the bar construction functor $M\mapsto \overline M$ is additive, the direct sum $M\oplus N$ is an 
endo-permutation Mackey functor provided that for each $Q\le P$, the endo-permutation $kN_P(Q)/Q$-modules 
$\overline M (Q)$ and $\overline N(Q)$ are  compatible,  that is, $\overline M (Q)^*\otimes\overline N(Q)$ is a permutation 
$kN_P(Q)/Q$-module, \cite[Proposition 2.3]{D1}. If this condition holds, then we say that $M$ and $N$ are 
\textit{\textbf{compatible}} endo-permutation Mackey functors.

Moreover \cite[Lemma 10.8.5]{BGreen} yields
\[
\overline{M\widehat\otimes N}(Q) \cong \overline{M} (Q)\otimes \overline{N}(Q)
\]
for all Mackey functors $M$ and $N$ for $P$ and for any subgroup $Q$ of $P$. Here the tensor product on the right hand side is the usual tensor product over $k$ of $kN_P(Q)/Q$-modules. Thus, since the
tensor product of endo-permutation modules is again an endo-permutation module, by \cite[Proposition 2.2]{D1}, the Mackey 
functor $M\widehat\otimes N$ is also an endo-permutation Mackey functor. 

Note that, in general, the class of capped endo-permutation Mackey functors is not closed under taking duals, since 
$\overline{M^*}(Q)$ may be zero even though $\overline M(Q)$ is not. However, taking twin-duals work. Indeed, if $M$ is a 
Mackey functor, then by Equation (\ref{eqn:twin-dual}), we have 
$\overline{M^\circ}(Q) = (\overline{M}(Q))^*$. Further if $M$ is a capped endo-permutation Mackey functor, then the $kN_P(Q)/Q$-module 
$\overline{M}(Q)$ is a capped endo-permutation module. Also, by \cite[Proposition 2.2]{D1}, the dual module $(\overline{M}(Q))^*$ is a capped
endo-permutation module. Therefore, the twin-dual $M^\circ$ is a capped endo-permutation Mackey functor.

Finally, by Proposition \ref{pro:barres} and by \cite[Corollary 2.13]{BT}, the class of endo-permutation Mackey functors is 
preserved under the functors of restriction, multiplicative deflation, multiplicative inflation and the transport of structure.

We summarize these properties with the following theorem.
\begin{thm}
The class of capped endo-permutation Mackey functors is closed under the operations of tensor products, taking primordial 
direct summands, taking twin-duals and the under the functors  of restriction, multiplicative deflation, 
multiplicative inflation and the transport of structure introduced in the previous section. Moreover, direct sum of two compatible endo-permutation 
Mackey functors is also endo-permutation. 
\end{thm}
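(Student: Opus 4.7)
The plan is to reduce every closure statement to a pointwise statement about the $kN_P(Q)/Q$-modules $\overline{M}(Q)$ via the characterization lemma (the one just proved), and then invoke the analogous closure properties of capped endo-permutation modules due to Dade \cite{D1} and Bouc--Th\'evenaz \cite[Corollary 2.13]{BT}. Concretely, for any operation $F$ on Mackey functors appearing in the statement, I would produce a formula expressing $\overline{FM}(Q)$ in terms of the family $\{\overline{M}(R)\}_{R\le P}$, show that the resulting $kN_P(Q)/Q$-module is a (nonzero, capped) endo-permutation module, and then apply the lemma in the reverse direction to conclude.

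For the operations that are handled directly by the bar functor, the ingredients are already assembled in the paragraphs preceding the theorem. For tensor products, the identity
\[
\overline{M\widehat\otimes N}(Q)\cong \overline{M}(Q)\otimes \overline{N}(Q)
\]
from \cite[Lemma 10.8.5]{BGreen} combined with \cite[Proposition 2.2]{D1} gives the claim. For twin-duals, I would use the equation $\overline{M^\circ}(Q)=(\overline{M}(Q))^*$ and the fact, again from \cite[Proposition 2.2]{D1}, that duals of capped endo-permutation modules are capped endo-permutation. For a primordial direct summand $M'$ of $M$, the additivity of the bar functor gives $\overline{M'}(Q)$ as a direct summand of $\overline{M}(Q)$, and capped endo-permutation modules are stable under passage to nonzero summands; primordiality is exactly what rules out the bad case $\overline{M'}(Q)=0$. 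For the direct sum of two compatible endo-permutation Mackey functors $M$ and $N$, the compatibility assumption is by definition the condition that each $\overline{M}(Q)\oplus \overline{N}(Q)$ is again endo-permutation in the sense of \cite[Proposition 2.3]{D1}.

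For the functorial operations, I would feed Proposition \ref{pro:barres} into the characterization lemma. Restriction and multiplicative deflation are immediate: parts (1) and (2) of that proposition identify $\overline{\res^P_K M}(H)$ and $\overline{\jef^P_{P/N}M}(H/N)$ with the restriction and the ``same'' module underlying $\overline{M}(H)$, and \cite[Corollary 2.13]{BT} says these preserve the class of capped endo-permutation modules. Transport of structure along an isomorphism is trivial. The genuine work is multiplicative inflation: part (3) of Proposition \ref{pro:barres} identifies
\[
\overline{\jnf_{P/N}^{P} M'}(H)\cong \inf_{N_P(H)/N_{HN}(H)}^{N_P(H)/H}\bigl(\iso^\lambda(\overline{M'}(HN/N))\bigr),
\]
and I would verify that this is capped endo-permutation by chaining three of the preserved operations on modules (transport of structure, then inflation along the canonical quotient, noting that every group involved is a $p$-group since $P$ is), again using \cite[Corollary 2.13]{BT}.

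The main obstacle, as expected, is exactly the multiplicative inflation case: the formula in Proposition \ref{pro:barres}(3) is a nontrivial composite, and one must verify carefully that the inflation kernel $(N_P(H)\cap N)H/H$ and the isomorphism $\lambda$ line up in such a way that Bouc--Th\'evenaz's stability result applies; once this is done, the assembly into a single theorem statement is mechanical.
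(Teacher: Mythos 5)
Your proposal is correct and follows essentially the same route as the paper: the paper's ``proof'' of this theorem is precisely the sequence of paragraphs that precede it, each of which reduces an operation $F$ on Mackey functors to a pointwise statement about $\overline{FM}(Q)$ and then invokes the corresponding stability property of capped endo-permutation modules from Dade's paper and from \cite[Corollary 2.13]{BT}, exactly as you outline. Your additional observation that the multiplicative inflation case requires chaining transport of structure with inflation inside $p$-groups is a correct and slightly more explicit reading of what the paper leaves implicit in the single sentence ``by Proposition \ref{pro:barres} and by \cite[Corollary 2.13]{BT}''.
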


Our next aim is to construct the group of capped endo-permutation Mackey functors. Following Dade, we want to consider an equivalence 
relation on the isomorphism classes of capped endo-permutation Mackey functors. To have a comparison, recall that two capped  
endo-permutation $kP$-modules $m$
and $n$ are called \textbf{\textit{equivalent}}, written $m\sim n$ if they have isomorphic caps. To be more precise, note that the cap 
of a capped endo-permutation $kP$-module $m$ is the, unique up to isomorphism, indecomposable summand of $m$ with vertex $P$.
Thus, we have $m\sim n$ if $m$ and $n$ have a common indecomposable direct summand with vertex $P$.

Now in the case of Mackey functors, by \cite{S}, any indecomposable Mackey functor $M$ has a 
vertex. If $M$ is an endo-permutation Mackey functor, then it has an indecomposable 
summand with vertex $P$. Indeed, since, by definition, we have $\overline M (P) \neq 0$, there is an element $x\in M(P)$ 
such that the image $\bar x$ of $x$ in the quotient $\overline M(P)$ is not zero. Thus the subfunctor $\langle x\rangle$ of $M$ generated by 
the element $x$ has vertex $P$ since, by its choice, $x$ cannot be contained in $(\ind_Q^P\res^P_Q \langle x\rangle)(P)$.
However this summand is not necessarily primordial. Therefore we cannot define an equivalence relation on these functors 
using this summand with vertex $P$.
Instead, we consider the following relation.
\begin{defn}
Let $M$ and $N$ be capped endo-permutation Mackey functors for $P$. We say that $M$ and $N$ are \textbf{equivalent}
if for any subgroup $Q\le P$, the capped endo-permutation $kN_P(Q)/Q$-modules $\overline M(Q)$ and $\overline N(Q)$
are equivalent, with respect to Dade's equivalence of endo-permutation modules.
\end{defn}
It is straightforward to show that the above relation is an equivalence relation. Given a capped endo-permutation Mackey functor 
$M$, we denote by $[M]$ its equivalence class with respect to this equivalence relation. Moreover we denote the set of all
equivalence classes of endo-permutation Mackey functors for $P$ by $$D_\mu(P).$$
This set becomes an abelian group under the operation
\[
[M] + [N] = [M\widehat\otimes N] = [N\widehat\otimes M]
\]
where $M$ and $N$ are endo-permutation Mackey functors for $P$. Here, we use \cite[Proposition 1.9.1]{BGreen} for the
commutativity of the tensor product. Note that the identity element with respect to this operation is the class $[kB^P]$
of the Burnside functor since $$kB^P\widehat\otimes M \cong M\cong M\widehat\otimes kB^P$$ for any Mackey functor $M$ for $P$,
by Proposition 2.4.5 in \cite{BGreen}. Remark that the class $[kB^P]$ contains all endo-permutation Mackey functors 
$N$ such that $[\overline N (Q)] = [k]$ in the Dade group $D(N_P(Q)/Q)$ of $N_P(Q)/Q$. 

Regarding the inverses, as remarked earlier, the class of endo-permutation Mackey functors is not closed under taking duals
but it is closed under taking twin-duals. We claim that the inverse of the class $[M]$ is the class containing $M^\circ$.
Indeed, if $M$ is a capped endo-permutation Mackey functor for $P$, then $M^\circ$ is also a capped endo-permutation 
Mackey functor. Moreover by Equation (\ref{eqn:twin-dual}), we have
\[
\overline{(M^\circ\widehat\otimes M)}(Q) = \overline{M} (Q)\otimes \overline{M^\circ}(Q) = \overline{M} (Q)\otimes 
(\overline{M}(Q))^*.
\]
Therefore, in $D_\mu(P)$, we have
\[
[M^\circ\widehat\otimes M] = [kB^P].
\]
Thus we have proved the following theorem.
\begin{thm}
Let $P$ be a $p$-group and $k$ be an algebraically closed field of characteristic $p$. Then the set $D_\mu(P)$ of the 
equivalence classes of endo-permutation Mackey functors for $P$ over $k$ is an abelian group where
\begin{enumerate}
\item[(i)] the group operation is given by the tensor product of Mackey functors,
\item[(ii)] the identity element of the group is the class $[kB^P]$ of the Burnside functor, and
\item[(iii)] given an endo-permutation Mackey functor $M$, the inverse $-[M]$ of the class $[M]$ is the class containing the 
twin-dual  $M^\circ$ of $M$.
\end{enumerate}
The set $D_\mu(P)$ together with this group operation is called the \textbf{Dade group} of Mackey functors for $P$,
or, shortly, the \textbf{Mackey-Dade group} of $P$.
\end{thm}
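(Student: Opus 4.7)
The plan is to verify in turn each group axiom, using the formula
\[
\overline{M\widehat\otimes N}(Q)\cong \overline M(Q)\otimes \overline N(Q)
\]
from \cite[Lemma 10.8.5]{BGreen} to reduce everything at each subgroup $Q\le P$ to a known statement about the Dade group $D(N_P(Q)/Q)$ of Weyl-quotients, where the corresponding facts are already established by Dade.

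First I would check that the operation $[M]+[N]=[M\widehat\otimes N]$ is well-defined. If $M\sim M'$ and $N\sim N'$, then for every $Q\le P$ the endo-permutation modules $\overline M(Q)$ and $\overline{M'}(Q)$ are Dade-equivalent, and similarly for $N,N'$. Applying the displayed isomorphism termwise and using that Dade-equivalence of capped endo-permutation $kN_P(Q)/Q$-modules is compatible with tensor product over $k$ (this is essentially the definition of the group law on $D(N_P(Q)/Q)$), one concludes $\overline{M\widehat\otimes N}(Q)\sim \overline{M'\widehat\otimes N'}(Q)$ for all $Q$, hence the classes agree in $D_\mu(P)$. The fact that the sum of capped endo-permutation Mackey functors is again capped endo-permutation was already established in the previous theorem, so $D_\mu(P)$ is closed under $+$.

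Next I would verify associativity and commutativity of $+$; these are immediate from the associativity and commutativity of $\widehat\otimes_{\mu_k(P)}$ established in \cite[Proposition 1.9.1]{BGreen}, which are already quoted in the excerpt. For the identity, the excerpt records $kB^P\widehat\otimes M\cong M\cong M\widehat\otimes kB^P$ via \cite[Proposition 2.4.5]{BGreen}, so $[kB^P]$ is a two-sided identity. One should also remark that, by the displayed isomorphism, any Mackey functor $N$ satisfying $[\overline N(Q)]=[k]$ in every $D(N_P(Q)/Q)$ represents the same class as $kB^P$, which pins down the identity class intrinsically.

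For inverses, set $-[M]:=[M^\circ]$. The twin-dual $M^\circ$ is again a capped endo-permutation Mackey functor (this was shown earlier, using \eqref{eqn:twin-dual} together with the fact that duals of capped endo-permutation modules are capped endo-permutation). Applying the same bar-of-tensor-product formula together with \eqref{eqn:twin-dual},
\[
\overline{M^\circ\widehat\otimes M}(Q)\;\cong\; \overline{M^\circ}(Q)\otimes \overline M(Q)\;\cong\; \bigl(\overline M(Q)\bigr)^{\!*}\otimes \overline M(Q).
\]
Since $\overline M(Q)$ is capped endo-permutation, the right-hand side is a permutation $kN_P(Q)/Q$-module whose cap is the trivial module, i.e.\ its class in $D(N_P(Q)/Q)$ equals $[k]$, by Dade's formula for inverses in the classical Dade group. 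Therefore $\overline{M^\circ\widehat\otimes M}(Q)\sim k$ for every $Q\le P$, which means $[M^\circ\widehat\otimes M]=[kB^P]$ in $D_\mu(P)$. The main obstacle, if any, is purely bookkeeping: keeping track that \emph{capped} is preserved under every operation involved, so that one remains in the class of functors on which the relation is sensibly defined; but this has already been dispatched by the earlier corollary and theorem in the section, so nothing genuinely new is required beyond the reductions above.
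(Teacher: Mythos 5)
Your proposal follows essentially the same route as the paper: reduce every group-axiom check to the classical Dade groups $D(N_P(Q)/Q)$ via the formula $\overline{M\widehat\otimes N}(Q)\cong\overline M(Q)\otimes\overline N(Q)$, use commutativity of $\widehat\otimes$ from \cite[Prop.~1.9.1]{BGreen}, identify $[kB^P]$ as the unit via \cite[Prop.~2.4.5]{BGreen}, and produce inverses from the twin-dual using \eqref{eqn:twin-dual} so that $\overline{M^\circ\widehat\otimes M}(Q)\cong(\overline M(Q))^*\otimes\overline M(Q)$ lands in the trivial class. The only substantive thing you add beyond the paper's text is the explicit well-definedness check of $+$ (independence of representatives), which the paper leaves implicit because it is inherited termwise from the classical Dade group; otherwise the two arguments coincide.
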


\begin{rem}
Let $R\le P$. Then the restriction functor on Mackey functors induces  a group homomorphism
\[
\res^P_R: D_\mu(P) \to D_\mu(R).
\]
Similarly, If $N\unlhd P$ and $Q=P/N$, the multiplicative deflation and multiplicative inflation maps of the previous section induce group
homomorphisms
\[
\jef^P_Q: D_\mu(P)\to D_\mu(Q)
\]
and
\[
\jnf^P_Q: D_\mu(Q)\to D_\mu(P).
\]
Also, if $P$ and $P^\prime$ are isomorphic groups, then any isomorphism between them induces a transport of structure map
between the corresponding Mackey-Dade groups. However, we do not know a way to construct a multiplicative induction functor for 
endo-permutation Mackey functors, and therefore, at this point, we cannot equip $D_\mu$ with a structure of a biset functor. 
\end{rem}
\section{Relating the Mackey-Dade group to the Dade group}\label{sec:dd}
The Mackey-Dade group of $P$ is closely related to the Dade groups of $P$ and the Weyl groups of its subgroups, as the 
definition suggests. In this section, we clarify this relation by proving the following theorem.  
\begin{thm}\label{thm:dadeiso}
Let $P$ and $k$ be as above. Then there is an isomorphism of abelian groups
\[
D_\mu(P) \cong \bigoplus_{Q\le_P P} D(N_P(Q)/Q)
\]
where the sum is over a set of representatives of conjugacy classes of subgroups of $P$. Moreover a 
canonical isomorphism is given by associating an endo-permutation Mackey functor $M$ to the sequence 
$\big([\overline M(Q)]\big)_{Q\le_P P}$.
\end{thm}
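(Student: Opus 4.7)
The plan is to show the candidate map $\Phi:D_\mu(P)\to\bigoplus_{Q\le_P P} D(N_P(Q)/Q)$ given by $[M]\mapsto ([\overline M(Q)])_Q$ is an isomorphism by checking well-definedness, the homomorphism property, injectivity and surjectivity. For well-definedness, each $\overline M(Q)$ is a capped endo-permutation $kN_P(Q)/Q$-module by the earlier characterizing lemma, so each coordinate lies in $D(N_P(Q)/Q)$; moreover the equivalence relation defining $D_\mu(P)$ was built precisely so that equivalent Mackey functors have Dade-equivalent bar evaluations, so $\Phi$ descends to classes. For the homomorphism property, \cite[Lemma 10.8.5]{BGreen} gives $\overline{M\widehat\otimes N}(Q)\cong \overline M(Q)\otimes_k\overline N(Q)$, which matches the addition in $D(N_P(Q)/Q)$.

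Injectivity is essentially tautological: the definition of the equivalence relation on $D_\mu(P)$ is exactly that $\overline M(Q)\sim\overline N(Q)$ in $D(N_P(Q)/Q)$ for every $Q$, so $\Phi([M])=\Phi([N])$ forces $[M]=[N]$. This leaves surjectivity as the bulk of the work. The strategy is reductive: I would show that for each conjugacy class representative $Q\le P$ and each capped endo-permutation $kN_P(Q)/Q$-module $V$, there exists a capped endo-permutation Mackey functor $M_{Q,V}$ for $P$ satisfying $[\overline{M_{Q,V}}(Q)]=[V]$ and $[\overline{M_{Q,V}}(R)]=[k]$ (the identity of $D(N_P(R)/R)$) for every $R$ not $P$-conjugate to $Q$. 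Once these building blocks are in hand, the tensor product $\widehat{\bigotimes}_Q M_{Q,V_Q}$ realizes an arbitrary tuple $(V_Q)_Q$ in the target, again by the bar-tensor identity.

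The main obstacle is the explicit construction of $M_{Q,V}$. My approach is to view $V$ as a module over $kN_P(Q)/Q$, lift it to a suitable Mackey functor on $N_P(Q)/Q$ whose bar at the trivial subgroup carries the class of $V$, apply the multiplicative inflation $\jnf^{N_P(Q)}_{N_P(Q)/Q}$ to obtain a Mackey functor on $N_P(Q)$, and finally transport it to $P$ via an appropriate induction-type functor. Propositions \ref{pro:barres} and \ref{pro:barjnf} then reduce the bar evaluations $\overline{M_{Q,V}}(R)$ at arbitrary $R\le P$ to inflations and transports of bar evaluations of the starting module Mackey functor on $N_P(Q)/Q$. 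The delicate point is arranging Dade-triviality of these bar evaluations away from the conjugacy class of $Q$; I expect this to require tensoring the raw construction with a Mackey functor whose bar is trivial everywhere (the Burnside functor $kB^P$ is a natural candidate, since $\overline{kB^P}(R)\cong k$ for every $R\le P$), so as to replace unwanted zero bars with bars in the trivial Dade class without disturbing the Dade class at $Q$.
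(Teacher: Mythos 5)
Your overall skeleton is right and matches the paper's: define $\Phi([M])=([\overline M(Q)])_Q$, note it is a homomorphism via $\overline{M\widehat\otimes N}(Q)\cong\overline M(Q)\otimes\overline N(Q)$, observe injectivity is immediate from the definition of the equivalence relation, and reduce everything to surjectivity. But there are two genuine problems in the surjectivity argument.

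First, the construction of your building blocks $M_{Q,V}$ is only a sketch, and the route you propose (lift $V$ to a Mackey functor on $N_P(Q)/Q$, apply $\jnf^{N_P(Q)}_{N_P(Q)/Q}$, then transport to $P$ via ``an appropriate induction-type functor'') runs into precisely the difficulty the paper flags elsewhere: there is no multiplicative induction available, and ordinary induction does not interact well with bar evaluations in the way you would need. The paper avoids this entirely by taking, for each representative $Q$ and each chosen module $n_Q$ in the class $m_Q$, the atomic conjugation functor $C_{Q,n_Q}$ concentrated at the conjugacy class of $Q$ and then the Mackey functor $Y_{Q,n_Q}=\ind_\rho^\mu\infl_\gamma^\rho C_{Q,n_Q}$; by \cite[Theorem 3.4.1 and Proposition 3.6]{C1} this has $\overline{Y_{Q,n_Q}}(R)=0$ for $R\neq_P Q$ and $\overline{Y_{Q,n_Q}}(Q)=n_Q$. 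Your proposal never arrives at a concrete functor with controlled bar evaluations.

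Second, and more seriously, your proposed fix for the vanishing bars fails. You want $\overline{M_{Q,V}}(R)\cong k$ (rather than $0$) for $R$ not conjugate to $Q$, and you suggest tensoring with $kB^P$ ``to replace unwanted zero bars with bars in the trivial Dade class.'' But $kB^P$ is the unit for $\widehat\otimes$, and $\overline{(M\widehat\otimes kB^P)}(R)\cong\overline M(R)\otimes k\cong\overline M(R)$: if $\overline M(R)=0$ it stays $0$. Tensoring cannot turn a zero bar into a nonzero one. The operation you need is a direct sum, not a tensor product. Indeed the paper assembles the inverse as $\varrho_P((m_Q)_Q)=\big[\bigoplus_{Q\le_P P}Y_{Q,n_Q}\big]$: the direct sum makes $\overline{\big(\bigoplus_Q Y_{Q,n_Q}\big)}(R)\cong n_R$ exactly, because only the summand indexed by the class of $R$ survives. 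That is also why the paper has no need for separate ``building block'' functors $M_{Q,V}$ and a final tensor product; the single direct sum $\bigoplus_Q Y_{Q,n_Q}$ already realizes an arbitrary tuple. You should replace your tensor-product assembly and the $kB^P$ tensor trick with this direct-sum construction, and you should cite (or reprove) the bar computation of $Y_{Q,n_Q}$ to make the surjectivity step rigorous.
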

\begin{proof}
Define
\[
\bar ? : D_\mu(P)\rightarrow \bigoplus_{Q\le_P P} D(N_P(Q)/Q)
\]
by $\overline{[M]} = \big([\overline M(Q)]\big)_{Q\le_P P}$. Clearly, this is an abelian group homomorphism. To show that it is also 
bijective, we construct an inverse. Let $(m_Q)_{Q\le_P P}$ be a sequence in the direct sum. For each subgroup $Q\le P$, denote
by $n_Q$ an endo-permutation $kN_P(Q)/Q$-module representing the class $m_Q$. Also let
$C_{Q,n_Q}$ be the atomic conjugation functor concentrated on the conjugacy class of $Q$ with $C_{Q,n_Q}(Q) = n_Q$. Finally,
let 
\[
Y_{Q,n_Q} = \ind_\rho^\mu\infl_\gamma^\rho C_{Q,n_Q}.
\]
With this notation, define the map
\[
\varrho_P : \bigoplus_{Q\le_P P} D(N_P(Q)/Q) \rightarrow D_\mu(P)
\]
by 
\[
\varrho_P((m_Q)_{Q\le_P P}) = \Big[\bigoplus_{Q\le_P P} Y_{Q,n_Q}\Big].
\]
For simplicity, we write $Y$ for the Mackey functor $\bigoplus_{Q\le_P P} Y_{Q,n_Q}$.
Now we need to check that $Y$ is a capped endo-permutation module, that is, we need to justify that the 
$kN_P(R)/R$-module $\overline{Y}(R)$ is a capped endo-permutation module for each subgroup $R\le P$. But by 
\cite[Theorem 3.4.1 and Proposition 3.6]{C1}, we have  $\overline{Y_{Q, n_Q}}(R) = 0$ unless $R$ is $P$-conjugate to $Q$ and 
\[
\overline{Y_{R, n_R}}(R) = n_R
\]
which is a capped endo-permutation $kN_P(R)/R$-module by its choice. Thus $Y$ is a capped endo-permutation Mackey functor.

On the other hand, to prove that the map $\varrho_P$ is well-defined, let $n_Q^\prime$ be another representative of the class $m_Q$
 for each $Q\le P$. Then since $\overline{Y_{Q, n_Q}}(Q) = n_Q$, for each $Q\le P$, the equalities
\[
\Big[\overline{\bigoplus_{Q\le_P P}Y_{Q,n_Q}}(R)\Big] = [n_R] =m_R = [n_R^\prime] =\Big[\overline{\bigoplus_{Q\le_P P}
Y_{Q,n_Q^\prime}}(R)\Big].
\]
hold in the Dade group of $N_P(R)/R$. Therefore, by definition, the equality
\[
\Big[\bigoplus_{Q\le_P P} Y_{Q,n_Q}\Big] = \Big[\bigoplus_{Q\le_P P} Y_{Q,n_Q^\prime}\Big]
\]
holds in the Mackey-Dade group of $P$, that is, the map is well-defined. The above proof also shows that
\[
\overline{\varrho_P((m_Q)_{Q\le_P P})} = ([\overline{Y}(Q)])_{Q\le_P P} = ([n_Q])_{Q\le_P P} = (m_Q)_{Q\le_P P}.
\]
Hence, we have proved that the composition $\overline{\varrho_P}$ is the identity endomorphism of $\oplus_Q D(N_P(Q)/Q)$.
On the other hand, if $M$ is an endo-permutation Mackey functor, we have
\[
\overline{\varrho_P([\overline M])} (R) = \overline{\bigoplus_{Q\le_P P} Y_{Q,\overline M(Q)}} (R) = \overline{M}(R)
\]
for each subgroup $R\le P$. Therefore $\varrho_P( \overline ?) = \mbox{\rm id}_{D_\mu(P)}$, as required.
\end{proof}

As a corollary to the above theorem, we determine the torsion-free rank of $D_\mu(P)$ as follows. For simplicity, we put 
$\mathbb Q D_\mu(P) = \mathbb Q\otimes_\mathbb Z D_\mu(P)$.
\begin{cor}
With the above notation, the $\mathbb Q$-dimension of $\mathbb Q D_\mu(P)$ is the number of conjugacy classes of non-cyclic
subquotients of $P$, that is,
\[
\mbox{\rm dim}_{\mathbb Q} \mathbb Q D_\mu(P) = |\{ (R,Q)| Q\unlhd R\le P: R/Q \mbox{ non-cyclic}\}/P|.
\]
\end{cor}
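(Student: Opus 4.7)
The plan is to combine the isomorphism of Theorem~\ref{thm:dadeiso} with the Bouc--Th\'{e}venaz rank formula for the ordinary Dade group, and then match the resulting index set with $P$-conjugacy classes of non-cyclic subquotients by a standard orbit-counting argument.

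First, tensoring the isomorphism of Theorem~\ref{thm:dadeiso} with $\mathbb Q$ over $\mathbb Z$ yields
\[
\mathbb Q D_\mu(P) \;\cong\; \bigoplus_{Q\le_P P} \mathbb Q D(N_P(Q)/Q).
\]
Next I would invoke \cite[Theorem A]{BT}, which asserts that for any finite $p$-group $H$ the $\mathbb Q$-dimension of $\mathbb Q D(H)$ equals the number of $H$-conjugacy classes of non-cyclic subgroups of $H$. Applying this with $H = N_P(Q)/Q$ for each $Q\le_P P$, I obtain
\[
\dim_{\mathbb Q}\mathbb Q D_\mu(P) \;=\; \sum_{Q\le_P P} \#\bigl\{\,\overline R \le N_P(Q)/Q : \overline R \text{ non-cyclic}\bigr\}\big/(N_P(Q)/Q).
\]

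The final step is purely combinatorial. A subquotient of $P$ is a pair $(R,Q)$ with $Q\unlhd R\le P$, and the $P$-action by simultaneous conjugation makes the set of such pairs into a $P$-set. I would argue as follows: the $P$-orbit of the first coordinate $Q$ is determined by the $P$-conjugacy class of $Q$, and the stabilizer of $Q$ under $P$-conjugation is exactly $N_P(Q)$. Hence, after fixing a representative $Q$ of each $P$-conjugacy class of subgroups, the $P$-orbits of pairs $(R,Q)$ with this fixed second coordinate correspond bijectively to $N_P(Q)$-orbits of subgroups $R$ with $Q\le R\le N_P(Q)$, equivalently to $N_P(Q)/Q$-conjugacy classes of subgroups $\overline R = R/Q$ of $N_P(Q)/Q$. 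Since $R/Q$ is non-cyclic if and only if $\overline R$ is non-cyclic, summing over $Q\le_P P$ recovers precisely the double sum above, giving
\[
\dim_{\mathbb Q}\mathbb Q D_\mu(P) \;=\; \bigl|\{(R,Q) \mid Q\unlhd R\le P,\ R/Q \text{ non-cyclic}\}/P\bigr|.
\]

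There is no real obstacle here; once Theorem~\ref{thm:dadeiso} is in hand the argument is essentially bookkeeping. The only point that deserves a sentence of care is the identification of the stabilizer of $Q$ with $N_P(Q)$, which justifies reducing $P$-orbits of pairs with fixed second coordinate to conjugacy classes inside the Weyl group $N_P(Q)/Q$.
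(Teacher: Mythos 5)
Your proof is correct and follows essentially the same route as the paper: tensor the isomorphism of Theorem~\ref{thm:dadeiso} with $\mathbb Q$, apply the Bouc--Th\'evenaz rank formula from \cite[Theorem A]{BT} to each summand $\mathbb Q D(N_P(Q)/Q)$, and then match the resulting index set with $P$-conjugacy classes of non-cyclic subquotients. The only difference is cosmetic: you spell out the orbit-stabilizer reduction (fixing a representative $Q$ and identifying $P$-orbits of pairs $(R,Q)$ with $N_P(Q)/Q$-conjugacy classes of $R/Q$) a bit more explicitly than the paper does, but the underlying bijection is the same.
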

\begin{proof}
By the above theorem, we have
\[
\mbox{\rm dim}_{\mathbb Q} \mathbb Q D_\mu(P) = \sum_{Q\le_P P} \dim_\mathbb Q \mathbb Q D(N_P(Q)/Q).
\]
On the other hand, by \cite[Theorem A]{BT}, the torsion-free rank of $D(N_P(Q)/Q)$ is equal to the number $nc(N_P(Q)/Q)$ of
conjugacy classes of non-cyclic subgroups of $N_P(Q)/Q$. Thus we get 
\[
\mbox{\rm dim}_{\mathbb Q} \mathbb Q D_\mu(P) = \sum_{Q\le_P P} nc(N_P(Q)/Q).
\]
But note that if $R/N$ is a non-cyclic subquotient of $P$, then it is a subgroup of exactly one group of the form $N_P(Q)/Q$, 
namely of the one for which $Q$ and $N$ are conjugate. Therefore, as $Q$ runs over a complete set of representatives of 
conjugacy classes of $P$ and as $R/Q$ runs over a complete set of representatives of conjugacy classes of non-cyclic 
subgroups of $N_P(Q)/Q$, the groups $R/Q$ run over a complete set of representatives of conjugacy classes of non-cyclic 
subquotients of $P$. This completes the proof.
\end{proof}

A similar result can be obtained for the torsion-part of the Mackey-Dade group. On the other hand,
in the reverse direction, we can identify the Dade group $D(P)$ with a subgroup of the Mackey-Dade group $D_\mu(P)$ in a 
canonical way by identifying the Dade group in the above direct sum, as follows. 
\begin{cor}
The above isomorphism $\varrho_P$ induces a group monomorphism
\[
\varrho_P: D(P)\rightarrow D_\mu(P)
\]
given by associating an endo-permutation $kP$-module $m$ to the Mackey functor $$\bigoplus_{Q\le_P P} Y_{Q,m_Q}$$
where $m_Q = \defl^{N_P(Q)}_{N_P(Q)/Q}\res^P_{N_P(Q)}\, m$ for each $Q\le P$. 
\end{cor}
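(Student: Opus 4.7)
The plan is to factor the claimed map through the isomorphism $\varrho_P$ of Theorem \ref{thm:dadeiso}. First I would construct, for each subgroup $Q \le P$, the map
\[
\alpha_Q : D(P) \to D(N_P(Q)/Q), \qquad [m] \mapsto [\defl^{N_P(Q)}_{N_P(Q)/Q}\res^P_{N_P(Q)} m],
\]
and then take the product $\alpha = (\alpha_Q)_{Q\le_P P} : D(P) \to \bigoplus_{Q\le_P P} D(N_P(Q)/Q)$. To see that $\alpha$ is a group homomorphism, I would invoke two standard facts on the Dade group: the restriction $\res^P_{N_P(Q)}$ preserves (capped) endo-permutation modules and induces a group homomorphism of Dade groups (Dade, \cite{D1}), and Dade's slash construction applied to $Q\unlhd N_P(Q)$ yields the deflation group homomorphism $\defl^{N_P(Q)}_{N_P(Q)/Q} : D(N_P(Q)) \to D(N_P(Q)/Q)$ (Bouc, \cite{BDade}). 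Since each $\alpha_Q$ is a composition of these, and the product of group homomorphisms is a group homomorphism, $\alpha$ is well-defined and additive.

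Next I would compose $\alpha$ with the isomorphism $\varrho_P$ of Theorem \ref{thm:dadeiso}. Writing $m_Q = \defl^{N_P(Q)}_{N_P(Q)/Q}\res^P_{N_P(Q)} m$, the composition unfolds to
\[
\varrho_P(\alpha([m])) = \Big[\bigoplus_{Q\le_P P} Y_{Q,m_Q}\Big],
\]
which is exactly the formula in the statement. Being the composition of a group homomorphism with an isomorphism, this map is a group homomorphism $D(P) \to D_\mu(P)$, justifying the corollary's slight abuse in reusing the name $\varrho_P$.

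Finally, for injectivity I would simply evaluate $\alpha$ at the trivial subgroup $Q = 1$: here $N_P(1) = P$ and $N_P(1)/1 = P$, so $\res^P_P$ and $\defl^P_P$ both act as the identity, giving $\alpha_1([m]) = [m]$. Consequently $\alpha$ is already injective on its first component, and hence $\varrho_P \circ \alpha$ is a monomorphism.

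The argument is essentially a routine assembly step rather than a deep calculation; the only point requiring care is the reliance on the fact that restriction and Dade's slash deflation respect the Dade-group structure. With that in hand, both the homomorphism property and injectivity fall out immediately, and no obstruction analogous to the tensor-induction difficulty mentioned in the introduction arises here.
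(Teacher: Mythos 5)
Your proposal is correct and matches the intended argument: the paper offers no explicit proof, stating the corollary as an immediate consequence of Theorem \ref{thm:dadeiso}, and your factorization through $\alpha = (\defl^{N_P(Q)}_{N_P(Q)/Q}\res^P_{N_P(Q)})_{Q\le_P P}$ followed by $\varrho_P$ is precisely the construction the paper describes by ``identifying the Dade group in the above direct sum.'' Your observation that injectivity follows from the $Q=1$ component, where $N_P(1)/1 = P$ and $\alpha_1$ is the identity on $D(P)$, is the clean way to see it and is entirely consistent with what the paper leaves implicit.
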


As another corollary, we have the following canonical representative of the equivalence class of an endo-permutation Mackey
functor.
\begin{cor}
Let $M$ be an endo-permutation Mackey functor for $P$.
Then the class of $M$ in $D_\mu(P)$ contains the Mackey functor
$$YM:=\bigoplus_{Q\le_P P} Y_{Q,\overline M(Q)}.$$ 
\end{cor}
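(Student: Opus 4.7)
The plan is to reduce the statement to an immediate consequence of Theorem \ref{thm:dadeiso} and the definition of the equivalence relation on endo-permutation Mackey functors. Recall that by definition, two capped endo-permutation Mackey functors $M$ and $N$ satisfy $[M]=[N]$ in $D_\mu(P)$ precisely when the $kN_P(R)/R$-modules $\overline M(R)$ and $\overline N(R)$ are Dade-equivalent for every subgroup $R\le P$. Hence it suffices to verify that $\overline{YM}(R)$ is Dade-equivalent (in fact, isomorphic) to $\overline M(R)$ for each $R\le P$.

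First I would check that $YM$ is actually a capped endo-permutation Mackey functor. This is essentially the verification carried out in the proof of Theorem \ref{thm:dadeiso}: each summand $Y_{Q,\overline M(Q)} = \ind_\rho^\mu \infl_\gamma^\rho C_{Q,\overline M(Q)}$ is built from the endo-permutation $kN_P(Q)/Q$-module $\overline M(Q)$, and by \cite[Theorem 3.4.1 and Proposition 3.6]{C1} its bar evaluations vanish outside the conjugacy class of $Q$ while $\overline{Y_{Q,\overline M(Q)}}(Q) = \overline M(Q)$ is capped endo-permutation by hypothesis. Since the bar construction is additive, this guarantees $\overline{YM}(R)$ is a capped endo-permutation module for every $R\le P$.

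Next I would compute $\overline{YM}(R)$ for an arbitrary $R\le P$. Using the additivity of the bar construction once more and the vanishing recalled above, exactly one summand contributes, namely the one indexed by the conjugacy class representative $Q$ of $R$. This yields
\[
\overline{YM}(R) = \bigoplus_{Q\le_P P}\overline{Y_{Q,\overline M(Q)}}(R) \cong \overline M(R)
\]
as $kN_P(R)/R$-modules. In particular $[\overline{YM}(R)] = [\overline M(R)]$ in the Dade group $D(N_P(R)/R)$ for each $R$, which by the definition of the equivalence relation gives $[YM]=[M]$ in $D_\mu(P)$, as claimed.

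There is no substantial obstacle here; the corollary is essentially a repackaging of the identity $\overline{\varrho_P([\overline M])}(R) = \overline M(R)$ established at the end of the proof of Theorem \ref{thm:dadeiso}. The only point that requires a moment of care is confirming that the vanishing and evaluation formulas for $\overline{Y_{Q,n}}$ cited from \cite{C1} continue to hold when the inflating module $n$ is an arbitrary capped endo-permutation module rather than a one-dimensional representation, but this is purely formal since those formulas are natural in the coefficient module.
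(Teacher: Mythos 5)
Your proof is correct and takes essentially the same approach the paper implicitly relies on: the corollary is exactly the identity $\overline{\varrho_P([\overline M])}(R)\cong\overline M(R)$ established at the end of the proof of Theorem~\ref{thm:dadeiso}, unwound via the definition of the equivalence relation. Your closing remark about the formulas from \cite{C1} holding for arbitrary coefficient modules is a fair point, but it is already used in that form in the proof of the theorem itself, so nothing new needs to be checked.
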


The above corollary suggests another characterization of the equivalence classes of capped endo-permutation Mackey functors. Let 
$M$ be a capped endo-permutation Mackey functor. For each $Q\le P$, let $C(M, Q)$ denote the cap of the capped
endo-permutation $kN_P(Q)/Q$-module $\overline M(Q)$. 

\begin{defn}
Let $M$ be a capped endo-permutation Mackey functor for $P$ over $k$. We define the \textbf{\textit cap} of $M$ as the capped
endo-permutation Mackey functor
\[
\mbox{\rm cap} (M) = \bigoplus_{Q\le_P P} Y_{Q, C(M,Q)}
\]
for $P$ over $k$.
\end{defn}

Note that by \cite[p. 470]{D1}, the cap of a capped endo-permutation $kP$-module $M$ is the unique, up to isomorphism, indecomposable 
summand of $M$ with vertex $P$. As we have explained above, a capped endo-permutation Mackey functor $M$ for $P$ may not 
have an indecomposable summand with vertex $P$ which is a capped endo-permutation Mackey functor. With the above definition, the cap of $M$ is the 
unique, up to isomorphism, primordial summand cap$(M)$ of $M$ such that the vertex of the $kN_P(Q)/Q$-module $\overline{\mbox{\rm cap}(M)}(Q)$ is $N_P(Q)/Q$ for 
each subgroup $Q\le P$. 

Now the following corollary is immediate by the definition of Dade's equivalence of endo-permutation modules and our
definition of equivalence of endo-permutation Mackey functors.
\begin{cor}
Let $M$ and $N$ be endo-permutation Mackey functors. Then $M$ is equivalent to $N$ if and only if there is an isomorphism 
\[
\mbox{\rm cap} (M)\cong \mbox{\rm cap} (N)
\]
of Mackey functors for $P$.
\end{cor}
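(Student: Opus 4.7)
The plan is to prove the two directions separately by reducing everything, via the bar construction, to the analogous statement about capped endo-permutation modules over $kN_P(Q)/Q$ and the basic computations with the atomic functors $Y_{Q,n_Q}$ established in the proof of Theorem~\ref{thm:dadeiso}.

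For the forward direction, suppose $M$ and $N$ are equivalent in $D_\mu(P)$. By definition, for every subgroup $Q\le P$ the capped endo-permutation $kN_P(Q)/Q$-modules $\overline M(Q)$ and $\overline N(Q)$ are Dade-equivalent, so by \cite[p.~470]{D1} their caps $C(M,Q)$ and $C(N,Q)$ are isomorphic as $kN_P(Q)/Q$-modules. This isomorphism induces, via the construction $n\mapsto Y_{Q,n}=\ind_\rho^\mu\infl_\gamma^\rho C_{Q,n}$, an isomorphism $Y_{Q,C(M,Q)}\cong Y_{Q,C(N,Q)}$ of Mackey functors for each representative $Q$. Taking the direct sum over a set of representatives of conjugacy classes of subgroups of $P$ produces the required isomorphism $\mbox{\rm cap}(M)\cong\mbox{\rm cap}(N)$.

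For the reverse direction, suppose $\mbox{\rm cap}(M)\cong\mbox{\rm cap}(N)$. Applying the bar construction $\overline{?}(R)$ to both sides for an arbitrary subgroup $R\le P$ and recalling, from the proof of Theorem~\ref{thm:dadeiso}, that $\overline{Y_{Q,n_Q}}(R)=0$ whenever $R$ is not $P$-conjugate to $Q$ and $\overline{Y_{R,n_R}}(R)=n_R$, only the summand indexed by (the representative of) the conjugacy class of $R$ contributes. We therefore obtain $C(M,R)\cong C(N,R)$ as $kN_P(R)/R$-modules. Since $C(M,R)$ and $C(M,N)$ are the caps of $\overline M(R)$ and $\overline N(R)$ respectively, this means $\overline M(R)$ and $\overline N(R)$ share a common indecomposable summand with vertex $N_P(R)/R$, so they are Dade-equivalent. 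As $R$ was arbitrary, $M$ is equivalent to $N$ in the sense of the definition preceding $D_\mu(P)$.

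There is essentially no obstacle here beyond bookkeeping; the only point requiring care is the observation that in the reverse direction the bar construction of the canonical decomposition $\bigoplus_Q Y_{Q,C(?,Q)}$ isolates precisely one summand per conjugacy class and evaluates to the cap of $\overline{?}(R)$, which is exactly the content of the formulas used in the well-definedness part of Theorem~\ref{thm:dadeiso}. Thus the corollary is a direct consequence of that theorem together with Dade's characterization of equivalence of capped endo-permutation modules by isomorphism of caps.
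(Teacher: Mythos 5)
Your proof is correct and is essentially the paper's own argument, merely spelled out: the paper declares the corollary ``immediate'' from the two definitions of equivalence, and you make this precise by running Dade's characterization (equivalence iff isomorphic caps) through the computation of $\overline{Y_{Q,n}}(R)$ already established in the proof of Theorem~\ref{thm:dadeiso}. (One trivial typo: ``$C(M,R)$ and $C(M,N)$'' should read ``$C(M,R)$ and $C(N,R)$''.)
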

\section{Ring of subquotients and the Mackey-Dade group}\label{sec:DvsL}

In this section, we show that the Mackey-Dade group $\mathbb QD_\mu(P)$ tensored with $\mathbb Q$ over $\mathbb Z$ can be realized as the kernel of the 
linearization map defined by the author in \cite{Clin}. First we recall the definitions that we need. 

Let $P$ be a finite $p$-group. We write $\mathcal{SQ}(P)$ for the set of all subquotients of $P$, that is,
\[
\mathcal{SQ}(P) = \{(Q,N)| N\unlhd Q\le P\}
\]
and write $\mathcal{SQ}_P(P)$ for the set of $P$-orbits of the set $\mathcal {SQ}$. We sometimes write the pair $(Q,N)$ as 
$Q/N$, and $(Q,N)\in \mathcal{SQ}(P)$ as $Q/N\squ P$. Also, the $P$-orbit containing $(Q,N)$ is denoted by $[Q,N]_P$ and we
write $Q/N\squ_P P$ instead of $[Q,N]_P\in\mathcal{SQ}_P(P)$.

Now we denote by $\Lambda(P)$ the free abelian group on the set $\mathcal{SQ}_P(P)$, that is,
\[
\Lambda(P) = \bigoplus_{Q/N \squ_P P} \mathbb Z [Q,N]_P.
\]
This group becomes a unital commutative associative ring with the product given by the linear extension of the product defined on 
the basis elements by 
\[
[R,N]_P \cdot [S,M]_P = \sum_{\substack{x\in R\backslash P/S\\ {}^xMN\le R\cap{}^xS}} [R\cap {}^xS, {}^xMN].
\]
The unity of this ring is $[P/1]_P$. We call $\Lambda(P)$ the \textbf{\textit{ring of subquotients}} of $P$. Note that the Burside ring 
is a subring of $\Lambda(P)$ via the inclusion 
\[
[P/R]\mapsto [R,1]_P.
\]
We refer to \cite{Clin} for further details on the structure of $\Lambda(P)$. 

The reason for introducing the above product on subquotients is to define a linearization map for Mackey functors. By \cite{TW},
the Mackey algebra for $P$ over a field $\mathbb K$ of characteristic zero is semi-simple. We denote by  
$\mathcal R_{\mu_\mathbb K} (P)$ the Grothendieck ring of the category of 
Mackey functors for $P$ over $\mathbb K$. Here the addition and the multiplication operations are 
given, respectively, by the direct sum and the tensor product of Mackey functors. The set of isomorphism 
classes of simple Mackey functors forms a basis for the ring $\mathcal R_{\mu_\mathbb K}(P)$ and the class of the Burnside 
ring functor $\mathbb KB^P$ is the unit element. Therefore, by the classification of simple Mackey functors given in \cite{TW2}, there 
is an isomorphism
\[
\mathcal R_{\mu_\mathbb K} (P) \cong \bigoplus_{\substack{(H,V)\\ H\le_P, V\in \mbox{\rm{\small{Irr}}}(\mathbb KN_P(H)/H)}} \mathbb Z
[S_{H,V}^P]
\]
of abelian groups where $[S_{H,V}^P]$ denotes the isomorphism class of the simple Mackey functor $S_{H,V}^P$ for $P$ with 
minimal subgroup $H$ and $S_{H,V}^P(H) = V$.

Now the linearization map 
\[
\lin_P^\mu: \Lambda(P)\rightarrow \mathcal R_{\mu_\mathbb K} (P)
\]
is defined as the linear extension of the map associating the basis element $[Q,N]_P$ to the class of the Mackey functor
$\ind_Q^P\infl_{Q/N}^Q \mathbb K B^{Q/N}$. Here the inflation and induction maps are the usual ones as defined in \cite{TW}, or as 
in \cite{BGreen}. 

By \cite[Theorem 5.3]{Clin}, this linearization map is a ring homomorphism. Also by \cite[Theorem 5.5]{Clin}, the extension
\[
\mathbb{Q}\lin_P^\mu: \mathbb{Q}\Lambda(P)\rightarrow \mathbb{Q}\mathcal R_{\mu_\mathbb Q} (P)
\]
is surjective. Hence, we have an analogy with the well-known linearization map
\[
\lin_P: B(P)\rightarrow \mathcal R_{\mathbb K}(P)
\]
where $\mathcal R_\mathbb K(P)$ is the $\mathbb K$-character ring of $P$ and the map is given by associating the basis element 
$[P/Q]$ of the Burnside ring $B^P(P)$ to the permutation character $\ind_Q^P \mathbb K = \ind_Q^P\,\infl_{Q/Q}^Q \mathbb K$ 
where $\mathbb K$ is the trivial $\mathbb KG$-module for any finite group $G$. 

To push the analogy further, recall that by \cite[Theorem D]{BT}, the kernel of $\mathbb Q\lin_P$ is the torsion-free group 
$\mathbb QD(P)= \mathbb K\otimes_{\mathbb Z} D(P)$. Next, we show that this is also the case for Mackey functors. However, as we shall see in the next 
section, this identification is functorial only in a restricted sense. 

Henceforth, let $P$ be a finite $p$-group and let $s(P)$ denote a set of representatives of the conjugacy classes of subgroups of
$P$. To prove the analogue of \cite[Proposition 5.1]{BT} in terms of Mackey functors, we need the following result.

\begin{thm}\mbox{\rm \cite[Proposition 6.1]{TW}}\label{thm:tw18}
Let $P$ be a $p$-group and let $s(P)$ be as above. Then the bar construction induces an isomorphism of $\mathbb Q$-vector 
spaces
\[
\overline{?}: \mathbb Q\mathcal R_{\mu_\mathbb K}(P) \rightarrow \bigoplus_{R\in s(P)}\mathbb Q\mathcal R_\mathbb K
(N_P(R)/R)
\]
which maps a Mackey functor $M$ to the sequence $(\overline{M}(R))_{R\in s(P)}$. The inverse of $\overline ?$ maps a sequence 
$(m_R)_{R\in s(P)}$ to the Mackey functor $\bigoplus_{R\in s(P)} Y_{R,m_R}.$
\end{thm}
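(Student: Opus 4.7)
The plan is to exploit the semisimplicity of the Mackey algebra $\mu_\mathbb{K}(P)$ in characteristic zero and to match canonical bases on the two sides. By \cite{TW2}, the simple Mackey functors $S_{H,V}^P$ for $P$ over $\mathbb{K}$ are classified, up to isomorphism, by $P$-conjugacy classes of pairs $(H,V)$ where $H\le P$ and $V$ is a simple $\mathbb{K}N_P(H)/H$-module. Hence $\mathbb{Q}\mathcal R_{\mu_\mathbb{K}}(P)$ has a $\mathbb{Q}$-basis indexed by such pairs, while the target $\bigoplus_{R\in s(P)}\mathbb{Q}\mathcal R_\mathbb{K}(N_P(R)/R)$ has a basis indexed by pairs $(R,V)$ with $R\in s(P)$ and $V$ a simple $\mathbb{K}N_P(R)/R$-module. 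The two indexing sets are canonically identified, so the strategy is to show that $\overline{?}$ realizes this identification.

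The first step is to check that $\overline{?}$ descends to a well-defined $\mathbb{Q}$-linear map between these Grothendieck groups. Since $\mu_\mathbb{K}(P)$ is semisimple, every short exact sequence of Mackey functors splits, so the bar construction commutes with direct sums and is exact; thus $[M]\mapsto([\overline{M}(R)])_{R\in s(P)}$ is a well-defined additive map. To see it is a bijection I would compute $\overline{S_{H,V}^P}(R)$ directly from the structure of the simple functors: this is $V$ when $R$ is $P$-conjugate to $H$ and $0$ otherwise. So $\overline{?}$ sends a basis element to a basis element of the target, and is therefore an isomorphism of $\mathbb{Q}$-vector spaces.

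For the explicit inverse, I would invoke the Mackey functors $Y_{R,m_R}=\ind_\rho^\mu\infl_\gamma^\rho C_{R,m_R}$ built from the atomic conjugation functors $C_{R,m_R}$, exactly as in the proof of Theorem \ref{thm:dadeiso}. By \cite[Theorem 3.4.1 and Proposition 3.6]{C1}, one has $\overline{Y_{R,m_R}}(S)=0$ unless $S$ is $P$-conjugate to $R$, and $\overline{Y_{R,m_R}}(R)=m_R$. Hence the assignment $(m_R)_{R\in s(P)}\mapsto\bigoplus_R[Y_{R,m_R}]$ provides a one-sided inverse, which by the basis comparison above is automatically a two-sided inverse. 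The only real obstacle in the whole argument is justifying that $\overline{?}$ is additive on Grothendieck groups; once semisimplicity is used to supply exactness, everything else reduces to the known bookkeeping on the two explicitly described bases and the cited evaluations of the functors $Y_{R,m_R}$.
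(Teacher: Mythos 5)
This statement is attributed to Th\'evenaz and Webb (\cite[Proposition 6.1]{TW}) and the paper gives no proof of its own, so there is nothing internal to compare your argument against. Your blind reconstruction is correct and follows the standard route one would expect to underlie the cited result: semisimplicity of $\mu_\mathbb{K}(P)$ in characteristic zero makes the bar construction behave additively and exactly on the Grothendieck group, the classification of simple Mackey functors $S_{H,V}^P$ together with the fact that $\overline{S_{H,V}^P}(R)$ is $V$ when $R$ is $P$-conjugate to $H$ and $0$ otherwise shows that $\overline{?}$ carries the simple-functor basis bijectively onto the obvious basis of $\bigoplus_{R\in s(P)}\mathbb{Q}\mathcal{R}_\mathbb{K}(N_P(R)/R)$, and the explicit $Y_{R,m_R}$ with the cited evaluations from \cite[Theorem 3.4.1, Proposition 3.6]{C1} provide a one-sided (hence two-sided) inverse. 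The only small point worth making explicit, which you gesture at but do not spell out, is that the assignment $(m_R)\mapsto\bigoplus_R Y_{R,m_R}$ is additive in each $m_R$ and therefore descends to the Grothendieck groups; this follows because $\mathrm{Ind}_\rho^\mu\mathrm{Inf}_\gamma^\rho$ and $C_{R,-}$ are additive, or alternatively it is automatic once you know $\overline{?}$ is bijective and the proposed map is a one-sided inverse. With that caveat noted, I see no gap.
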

A similar result holds for the space $\mathbb Q\Lambda(P)$ of subquotients. Next we describe this result.

\begin{thm} Assume the above notation. Then the linear map
\[
\alpha_P: \mathbb Q\Lambda(P) \rightarrow \bigoplus_{R\in s(P)}\mathbb Q B(N_P(R)/R)
\]
extending the function given by mapping a basis element $[Q,N]_P$ of $\mathbb Q\Lambda(P)$ to the sequence $(a_R(Q,N))_{R\in s(P)}$ where
\[
a_R(Q,N) =  \sum_{\substack{g\in Q\backslash P/N_P(R):\\ N\le {}^gR\le Q}} [N_P(R)/N_{{}^{g^{-1}}Q}(R)] 
\]
is an isomorphism of $\mathbb Q$-vector spaces.
\end{thm}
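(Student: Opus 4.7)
The plan is to exhibit a natural bijection between the two bases and check that, with a suitable ordering, the matrix of $\alpha_P$ is triangular with identity diagonal blocks, so invertible.

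First I would match the bases. The source $\mathbb{Q}\Lambda(P)$ has basis $\{[Q,N]_P : N\unlhd Q\le P\}$ modulo $P$-conjugation, while the target has basis $\{(R,[N_P(R)/H]) : R\in s(P),\, R\le H\le N_P(R)\}$ modulo $N_P(R)$-conjugation on $H$. The assignment sends $[Q,N]_P$ to the pair $(R,H)$, where $R\in s(P)$ is the chosen representative of the $P$-class of $N$ and $H$ is the corresponding translate of $Q$; this lives in $N_P(R)$ because $N\unlhd Q$ forces $Q\le N_P(N)$. The residual ambiguity in the choice of translating element is exactly $N_P(R)$-conjugation of $H$, so the assignment is a well-defined bijection. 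In particular the two $\mathbb{Q}$-vector spaces have the same finite dimension.

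Next I would order both bases by $|N|$ (equivalently $|R|$), non-decreasing. The key observation is the admissibility condition $N\le {}^gR\le Q$ defining $a_R(Q,N)$: the inclusion $N\le {}^gR$ forces $|N|\le|R|$, so $a_R(Q,N)=0$ whenever $|R|<|N|$. Moreover, when $|R|=|N|$ the inclusion $N\le {}^gR$ upgrades to equality $N={}^gR$, which says that $R$ is $P$-conjugate to $N$; hence $a_R(Q,N)=0$ also when $|R|=|N|$ and $R$ is not in the class of $N$.

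It remains to compute the diagonal contribution, i.e. $a_N(Q,N)$ where we take $R=N$ as the class representative. The equality $N={}^gN$ then forces $g\in N_P(N)$, and since $Q\le N_P(N)$ the double cosets in $Q\backslash P/N_P(N)$ meeting this condition collapse to the single coset represented by $g=1$; the remaining requirement ${}^gR\le Q$ reduces to $N\le Q$, which holds by hypothesis. The corresponding term is $[N_P(N)/N_Q(N)]=[N_P(N)/Q]$, which is exactly the target basis element paired with $[Q,N]_P$ under the bijection above.

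Putting these pieces together, with respect to the chosen orderings the matrix of $\alpha_P$ is block triangular with identity diagonal blocks, hence invertible; consequently $\alpha_P$ is an isomorphism of $\mathbb{Q}$-vector spaces. The only real obstacle in the argument is notational bookkeeping: one must verify carefully that the $P$-conjugation ambiguity on the source side matches the $N_P(R)$-conjugation ambiguity on the target side so that the bijection of bases is genuinely well-defined, but this is built into the choice of representatives $s(P)$ together with the standard fact that the $P$-stabilizer of $N$ is $N_P(N)$.
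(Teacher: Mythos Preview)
Your argument is correct and is essentially the same as the paper's proof: both establish the bijection between $\mathcal{SQ}_P(P)$ and $\bigcup_{R\in s(P)}\mathcal{S}(N_P(R)/R)$ sending $[Q,N]_P$ to the $N_P(R)$-class of the translate of $Q$, and both use the observation that $a_R(Q,N)=0$ unless $|R|\ge|N|$, with the diagonal case collapsing to the single term $[N_P(N)/Q]$. The only difference is packaging---you phrase it as a block-triangular matrix with identity diagonal, while the paper separates the dimension count from an injectivity-by-minimal-counterexample argument---and the paper also checks explicitly that $a_R(Q,N)$ depends only on the $P$-class of $(Q,N)$, a routine verification you might add.
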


\begin{proof}
First, we check that the map $\alpha_P$ is well-defined. Let $(T,S) = {}^x(Q,N)$ for some $x\in P$. Then
\begin{eqnarray*}
a_R(T,S) = a_R({}^xQ,{}^xN) &=& \sum_{\substack{h\in {}^xQ\backslash P/N_P(R):\\ {}^xN\le {}^hR\le {}^xQ}} [N_P(R)/N_{{}^{h^{-1}x}Q}(R)]\\
&=& \sum_{\substack{x^{-1}h\in Q\backslash P/N_P(R):\\ N\le {}^{x^{-1}h}R\le Q}} [N_P(R)/N_{{}^{h^{-1}x}Q}(R)]\\
&=& \sum_{\substack{g\in Q\backslash P/N_P(R):\\ N\le {}^gR\le Q}} [N_P(R)/N_{{}^{g^{-1}}Q}(R)] \\ &=& a_R(Q,N)
\end{eqnarray*}
as required. Thus the map $\alpha_P$ is well-defined.

By definition $\alpha_P$ is $\mathbb Q$-linear. To prove that it is bijective, we first show that the domain and the range of $\alpha_P$ 
have the same $\mathbb Q$-dimension, and then show that the map $\alpha_P$ is injective. Recall that the set $\mathcal{SQ}_P(P)$ 
of all 
$P$-conjugacy classes of all pairs $(Q,N)$ with $N\unlhd Q\le P$ is a basis for the vector space $\mathbb Q\Lambda (P)$. On the other hand, a basis for the vector space
\[
\mathbb QB^\oplus (P) := \bigoplus_{R\in s(P)} \mathbb QB(N_P(R)/R)
\]
is the set
\[
\mathcal{SS}_P(P) := \bigcup_{R\in s(P)} \mathcal S(N_P(R)/R)
\]
where $\mathcal S_G(G)$ denotes the set of conjugacy classes of subgroups of $G$ for any finite group $G$. For simplicity, we write 
$[T]_R$ for the $N_P(R)/R$-conjugacy class of $T/R$. Therefore our claim is equivalent to prove
that $|\mathcal{SQ}_P(P)| = |\mathcal{SS}_P(P)|$. We define two functions
\begin{align*}
  \tilde f \colon \mathcal{SS}_P(P)&\to \mathcal{SQ}_P(P)\\
   [T]_R&\mapsto [T,R]_P.
\end{align*}
and
\begin{align*}
  f \colon \mathcal{SQ}_P(P)&\to \mathcal{SS}_P(P)\\
   [Q,N]_P&\mapsto [\{Q\}]_{\{N\}}
\end{align*}
where $\{N\}$ denotes the representative of the $P$-conjugacy class of $N$ in the set $s(P)$ and putting $\{N\} = {}^xN$, we
define $\{Q\}$ as the $N_P(\{N\})/\{N\}$-conjugacy class of ${}^xQ/{}^xN$. We have to show that $f$ and $\tilde f$ are well-defined
mutually inverse functions. 

The function $\tilde f$ is well-defined: if $[T]_R = [T^\prime]_{R^\prime}$, then $R = R^\prime$, and if
$T^\prime = {}^xT$ for some $x\in N_P(R)$, we get
\[
\tilde f([T^\prime]_R) = [T^\prime, R]_P = [{}^xT, R]_P = [{}^xT, {}^xR]_P = [T, R]_P = \tilde f([T]_R). 
\]
To see that the function $f$ is well-defined, let $(L,M) = {}^g(Q,N)$ for some $g\in P$. In particular, the subgroups $M$ and $N$
are $P$-conjugate and hence $\{M\} = \{N\}$. Let ${}^xN = \{M\} = {}^yM$. Then
\[
[{}^{y^{-1}x}L] = [{}^{y^{-1}xg}Q] = [Q]
\]
as $N_P(M)$-conjugacy classes since by the above choices, we have $y^{-1}xg\in N_P(M)$. Thus, $[{}^xL] = [{}^xQ]$ as
$N_P(\{M\})$-conjugacy classes and thus the function $f$ is well-defined.

Next, we show that the functions $f$ and $\tilde f$ are inverse functions. We have
\[
(\tilde f\circ f) ([Q,N]_P) = \tilde f([\{Q\}]_{\{N\}}) = [\{Q\}, \{N\}]_P. 
\]
Since $(\{Q\}, \{N\})$ is $P$-conjugate to $(Q,N)$, by definition, we get $\tilde f\circ f = \mbox{\rm id}_{\mathcal{SQ}_P(P)}$. 
Similarly, 
\[
(f\circ \tilde f) ([T]_R) = f([T,R]_P) = [\{T\}]_{\{R\}}. 
\]
Since $R\in s(P)$, we have $\{R\} = R$, and hence $\{T\}$ is $N_P(R)$-conjugate to $T$. Therefore $[\{T\}]_{\{R\}} = [T]_R$
and hence $f\circ \tilde f = \mbox{\rm id}_{\mathcal{SS}_P(P)}$. 

Finally, to complete the proof, we show that the map $\alpha_P$ is injective. Let
\[
u = \sum_{Q/N\squ_P P} u_{Q,N} [Q,N]_P 
\]
such that not all $u_{Q,N}$ are zero but $\alpha_P(u) = 0$. Let $R$ be a subgroup of $P$ of minimal order such that $u_{Q,R} 
\neq 0$ for some $Q\le P$ and, without loss, suppose that $R\in s(P)$. Then, since $\alpha_P(u) = 0$,
\begin{eqnarray*}
0 = a_R(u) &=& \sum_{Q/N\squ_P P} u_{Q,N} a_R(Q,N)\\
 &=&  \sum_{Q/N\squ_P P} u_{Q,N} \sum_{\substack{g\in Q\backslash P/N_P(R)\\N\le {}^gR\le Q}} [N_P(R)/N_{{}^{g^{-1}}Q}(R)].
\end{eqnarray*}
By the choice of $R$, the only non-zero contribution to the sum comes from the pairs where $N = {}^gR$ for some $g\in P$. Since
$N$ and $R$ are both taken up to $P$-conjugacy, there is only one possible choice for $N$ and since
$a_R(Q,N)$ is independent of the choice of the representative $(Q,N)$ of the class $[Q,N]_P$, we may let $N = R$. In this
case, the condition $N = {}^gR$ implies that $g\in N_P(R)$, and hence there is only the trivial $(Q, N_P(R)$-double coset to consider
in the second sum of the last sum above. Moreover, the indecesof the former sum run through the conjugacy classes subgroups of $N_P(R)$ containing $R$. Hence, we obtain
\begin{eqnarray*}
0 = a_R(u) &=&  \sum_{R\le Q\le_{N_P(R)}N_P(R)} u_{Q,R} [N_P(R)/Q].
\end{eqnarray*}
Now, since the set $\{[N_P(R)/Q] | R\le Q\le_{N_P(R)} N_P(R)\}$ is a basis for the $\mathbb Q$-vector space 
$\mathbb QB(N_P(R)/R)$, the above equality implies that $u_{Q,R} = 0$ for each $Q$. But this contradicts to the choice of
$R$. Hence there is no such $u$ and $\alpha_P$ is injective.
\end{proof}

The above isomorphism and the isomorphism of Theorem \ref{thm:tw18} are related by the following commutative diagram.
\begin{cor}
Assume the above notation, and let $\lin^\oplus_P$ denote the sum of the linearization maps $\lin_{N_P(R)/R}$ over $R\in s(P)$.
Then the diagram
\begin{equation*}
  \xymatrix@C+2em@R+2em{
   \mathbb Q\Lambda(P) \ar[d]_{\alpha_P} \ar[r]^{\lin_P^\mu} & \mathbb Q\mathcal R_{\mu_\mathbb Q}(P) \ar[d]^{\overline{?}}\\
   \bigoplus_{R\in s(P)}\mathbb Q B(N_P(R)/R) \ar[r]_{\lin^\oplus_P} &  \bigoplus_{R\in s(P)}\mathbb Q\mathcal R_\mathbb K
(N_P(R)/R)
  }
 \end{equation*}
commutes.
\end{cor}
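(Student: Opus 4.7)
The plan is to verify commutativity on the basis $\{[Q,N]_P\}$ of $\mathbb Q\Lambda(P)$, since every arrow in the square is $\mathbb Q$-linear. For a fixed basis element $[Q,N]_P$, both composites land in $\bigoplus_{R\in s(P)}\mathbb Q\mathcal R_\mathbb K(N_P(R)/R)$, so it suffices to compare the $R$-coordinate for each $R\in s(P)$. Unwinding the definitions, the lower-left composite produces the character
\[
\lin_{N_P(R)/R}(a_R(Q,N)) \;=\; \sum_{\substack{g\in Q\backslash P/N_P(R)\\ N\le {}^gR\le Q}} \ind_{N_{{}^{g^{-1}}Q}(R)/R}^{N_P(R)/R}\mathbb K,
\]
whereas the upper-right composite produces the class of the $\mathbb K N_P(R)/R$-module $\overline{\ind_Q^P\inf_{Q/N}^Q\mathbb KB^{Q/N}}(R)$. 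The corollary therefore reduces to identifying this bar construction with the displayed sum.

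To carry out the identification, I would apply the Mackey decomposition
\[
\res^P_{N_P(R)}\ind_Q^P M' \;=\; \bigoplus_{g\in N_P(R)\backslash P/Q}\ind_{N_P(R)\cap {}^gQ}^{N_P(R)}\,{}^g\res^Q_{Q\cap {}^{g^{-1}}N_P(R)}M',
\]
with $M'=\inf_{Q/N}^Q\mathbb KB^{Q/N}$, then apply $\jef^{N_P(R)}_{N_P(R)/R}$ followed by evaluation at $R/R$, invoking the identity (\ref{eqn:barLU}). Two reductions kill most summands. First, the $g$-summand survives in the bar only when $R\le {}^gQ$ (equivalently $R^g\le Q$): otherwise, with $K:=R\cap {}^gQ<R$, the transfer $t_K^R$ maps the $g$-summand of $M(K)$ isomorphically onto the $g$-summand of $M(R)$, so the latter vanishes in $\overline{M}(R)$. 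Second, for each surviving $g$, the bar at $R^g$ of the restricted inflated Burnside functor equals the trivial module $\mathbb K$ when $N\le R^g$ and vanishes otherwise; this uses the \cite{TW} convention that $(\inf_{Q/N}^Q M'')(H)$ equals $M''(H/N)$ for $N\le H$ and $0$ otherwise, together with the evaluation $\overline{\mathbb KB^{Q/N}}(H/N)\cong\mathbb K$ from the Burnside-functor example in Section~\ref{sec:epmf}. The surviving contributions assemble into
\[
\bigoplus_{\substack{g\in N_P(R)\backslash P/Q\\ N\le R^g\le Q}} \ind_{N_{{}^gQ}(R)/R}^{N_P(R)/R}\mathbb K.
\]

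Applying the bijection $g\mapsto g^{-1}$ between $N_P(R)\backslash P/Q$ and $Q\backslash P/N_P(R)$ converts the condition ``$N\le R^g\le Q$'' into ``$N\le {}^gR\le Q$'' and the stabilizer $N_{{}^gQ}(R)$ into $N_{{}^{g^{-1}}Q}(R)$, reproducing $\lin_{N_P(R)/R}(a_R(Q,N))$ exactly. The diagram therefore commutes.

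The main obstacle will be the second reduction: one must combine the bar construction with the restriction of the inflated Burnside functor using the correct convention for ordinary inflation, and track the trivial $N_{{}^gQ}(R)/R$-action on each surviving summand. Once this evaluation is established, matching the double-coset indexing and stabilizers with the data defining $a_R(Q,N)$ is pure group-theoretic bookkeeping, but it must be performed carefully to ensure that the two index sets genuinely coincide.
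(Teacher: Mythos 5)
Your proof is correct and reaches the same endpoint as the paper's, but by a genuinely more self-contained route. The paper shortcuts the hard part by citing the decomposition $\lin_P^\mu([R/N]_P)=\bigoplus_{N\le K\le_R R}Y_{K,\,\ind_{W_R(K)}^{W_P(K)}1}$ from \cite[Section 6]{Clin} and then applying the known bar values of the functors $Y_{K,m}$ (nonzero only at conjugates of $K$, where the bar returns $m$). You instead unwind that citation directly: a Mackey decomposition of $\res^P_{N_P(R)}\ind_Q^P(-)$, the observation that the bar at $R$ of $\ind_S^{N_P(R)}(-)$ annihilates every summand with $R\cap{}^gQ<R$ (the $g$-component at $R$ is in the image of $t_{R\cap{}^gQ}^R$), and then the evaluation $\overline{\infl_{Q/N}^Q\mathbb K B^{Q/N}}(R^g)=\mathbb K$ when $N\le R^g$ and $0$ otherwise, using the \cite{TW} convention for ordinary inflation and $\overline{\mathbb K B}(H)\cong\mathbb K$. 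Identifying $N_{N_P(R)\cap{}^gQ}(R)=N_{{}^gQ}(R)$ (valid since $N_P(R)$ normalizes $R$) and reindexing $g\mapsto g^{-1}$ converts your double cosets to those in $a_R(Q,N)$, matching the paper's final step. The trade-off is simply that your argument does not presuppose the prior-paper computation and so makes the corollary self-contained, at the cost of a slightly longer derivation; the paper's version is shorter but leans on a black-boxed identity. Both are correct.
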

\begin{proof}
Since all of the maps shown in the diagram are linear, it is sufficient to prove the claim for a 
basis element. Hence let $R/N$ be a subquotient of $P$. We need to show that
\[
\overline{\lin_P^\mu([R/N]_P)} = \lin_P^\oplus(\alpha_P([R/N]_P)).
\]
Note that, by definition, we have
\[
\lin_P^\mu([R/N]_P) = \ind_R^P\infl_{R/N}^R B^{R/N}.
\]
However by \cite[Section 6]{Clin}, we have
\[
\lin_P^\mu ([R/N]_P) = \bigoplus_{\substack{N\le K\le_R R}} Y_{K, \ind_{W_R(K)}^{W_P(K)}1}.
\]
Here we write $W_R(K) = N_R(K)/K$ to simplify the notation.
Also, since for any $W_P(K)$-module $m$, we have $\overline{Y_{K,m}}(Q) = 0$ unless $K=_P Q$ and $\overline{Y_{K,m}}(K) = m$, 
it follows that the $Q$-th component of $\overline{\lin_P^\mu([R/N]_P)}$ is
\begin{equation}\label{eqn:1}
(\overline{\lin_P^\mu([R/N]_P)})_Q \cong \bigoplus_{\substack{N\le K\le_R R\\ K=_P Q}} \ind^{W_P(Q)}_{W_{{}^{g^{-1}}R}(Q)}1
\end{equation}
where, for each $P$-conjugate $K$ of $Q$, the element $g$ is chosen so that $K = {}^gQ$.
On the other hand, by the definition of $\alpha_P$, the $Q$-th component of $\alpha_P([R/N]_P)$ is
\[
a_Q([R/N]_P) = \sum_{g\in R\backslash P/N_P(Q) : N\le {}^gQ\le R}  [N_P(Q)/N_{{}^{g^{-1}}R}(Q)] 
\]
Thus the $Q$-th component of $\lin_P^\oplus(\alpha_P([R/N]_P))$ is 
\begin{equation}\label{eqn:2}
(\lin_P^\oplus(\alpha_P([R/N]_P)))_Q \cong \bigoplus_{\substack{g\in R\backslash P/N_P(Q) \\ N\le {}^gQ\le R}}  
\ind^{N_P(Q)}_{N_{{}^{g^{-1}}R}(Q)}1.
\end{equation}
Finally, since the index set of Equation (\ref{eqn:1}) and Equation (\ref{eqn:2}) are the same, equality holds and the diagram 
commutes.
\end{proof}

As another corollary of the above theorem, we get an isomorphism 
\[
\ker(\lin_P^\mu) \cong \ker(\lin_P^\oplus).
\]
Now by \cite[Theorem D]{BT}, for any finite $p$-group $P$, we have
 \[
 \mathbb QD(P) \cong \ker(\lin_P:\mathbb QB(P)\rightarrow \mathbb Q\mathcal R_\mathbb Q(P)).
 \]
 Therefore we have
 \[
 \ker(\lin_P^\oplus) \cong \bigoplus_{R\in s(P)} \mathbb QD(N_P(R)/R).
 \]
 Thus, by Theorem \ref{thm:dadeiso}, we get an isomorphism
\[
\ker(\lin_P^\mu) \cong \mathbb QD_\mu(P)
\]  
together with the map 
$$\iota_P^\mu:=\alpha_P^{-1}\circ \iota^\oplus_P\circ \bar ?: \mathbb QD_\mu(P)\rightarrow \mathbb Q\Lambda(P) $$
 where we write $\iota_P: \mathbb QD(P)\rightarrow \mathbb QB(P)$ for the map $\alpha$ of \cite[Theorem D]{BT} and 
 $\iota_P^\oplus$ for
 the corresponding direct sum of $\iota_{N_P(R)/R}$'s. Now since $\alpha_P$ and
 $\bar ?$ are isomorphisms and $\iota_P^\oplus$ is injective, the map $\iota_P^\mu$ is also injective. Moreover, by the
 commutativity of the above diagram, for any endo-permutation Mackey functor $M$, we have
 \begin{eqnarray*}
(\lin_P^\mu\circ\iota_P^\mu)(M) =  \lin_P^\mu((\alpha_P^{-1}\circ \iota^\oplus_P) (\overline{M})) &=& (\varrho_P\circ \lin_P^\oplus\circ \iota_P^\oplus)( 
 \overline{M})= 0 
 \end{eqnarray*}
 Therefore, we obtain the following result.
\begin{thm}
Let $P$ be a $p$-group. The sequence
\begin{equation*}
  \xymatrix@C+2em@R+2em{
   0\ar[r]& \mathbb QD_\mu(P) \ar[r]^{\iota^\mu_P}& \mathbb Q\Lambda(P) \ar[r]^{\lin_P^\mu} & \mathbb Q\mathcal R_{\mu_\mathbb Q}(P) \ar[r] & 0
  }
 \end{equation*}
of $\mathbb Q$-vector spaces is exact.
 \end{thm}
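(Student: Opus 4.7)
The plan is to deduce the exact sequence by transporting, via the commutative square of the preceding corollary, the known exact sequence for each Weyl group obtained from Bouc--Thévenaz \cite[Theorem D]{BT}, combined with the identification of $D_\mu(P)$ as a direct sum of ordinary Dade groups of Weyl groups given in Theorem \ref{thm:dadeiso}.

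First I would record the bottom ``exact'' row. For each $R \in s(P)$, \cite[Theorem D]{BT} asserts that
\[
0 \to \mathbb{Q}D(N_P(R)/R) \xrightarrow{\iota_{N_P(R)/R}} \mathbb{Q}B(N_P(R)/R) \xrightarrow{\lin_{N_P(R)/R}} \mathbb{Q}\mathcal{R}_\mathbb{Q}(N_P(R)/R) \to 0
\]
is exact; taking the direct sum over $R \in s(P)$ and using that exactness is preserved under finite direct sums yields the exact sequence
\[
0 \to \bigoplus_{R\in s(P)} \mathbb{Q}D(N_P(R)/R) \xrightarrow{\iota_P^\oplus} \bigoplus_{R\in s(P)} \mathbb{Q}B(N_P(R)/R) \xrightarrow{\lin_P^\oplus} \bigoplus_{R\in s(P)} \mathbb{Q}\mathcal{R}_\mathbb{Q}(N_P(R)/R) \to 0.
\]

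Next I would pull this back along the vertical isomorphisms of the preceding commutative diagram. Since $\alpha_P$ is an isomorphism (by the theorem on $\mathbb{Q}\Lambda(P)$) and $\overline{?}$ is an isomorphism (Theorem \ref{thm:tw18}), the commutativity $\overline{?}\circ \lin_P^\mu = \lin_P^\oplus \circ \alpha_P$ immediately gives surjectivity of $\lin_P^\mu$ from that of $\lin_P^\oplus$, together with the identification $\ker(\lin_P^\mu) = \alpha_P^{-1}(\ker(\lin_P^\oplus)) = \alpha_P^{-1}(\mathrm{Image}(\iota_P^\oplus))$. Composing with the $\mathbb{Q}$-extension of the Mackey-Dade isomorphism $\bar ?: \mathbb{Q}D_\mu(P)\to \bigoplus_R\mathbb{Q}D(N_P(R)/R)$ from Theorem \ref{thm:dadeiso} then identifies this kernel canonically with $\mathbb{Q}D_\mu(P)$, and by definition
\[
\iota_P^\mu = \alpha_P^{-1}\circ\iota_P^\oplus\circ \bar ?
\]
realizes this identification as a map into $\mathbb{Q}\Lambda(P)$.

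Finally I would check injectivity and the zero-composition: $\iota_P^\mu$ is a composition of an isomorphism, an injection and an isomorphism, hence injective, and the composite $\lin_P^\mu\circ \iota_P^\mu$ becomes, via the commutative diagram,
\[
\lin_P^\mu\circ\iota_P^\mu = \overline{?}^{-1}\circ \lin_P^\oplus\circ \alpha_P\circ \alpha_P^{-1}\circ \iota_P^\oplus\circ \bar ? = \overline{?}^{-1}\circ(\lin_P^\oplus\circ\iota_P^\oplus)\circ\bar ? = 0
\]
because $\lin_P^\oplus\circ\iota_P^\oplus = 0$ componentwise. Combining injectivity of $\iota_P^\mu$, vanishing of the composition, the image equality $\mathrm{Image}(\iota_P^\mu) = \alpha_P^{-1}(\mathrm{Image}(\iota_P^\oplus)) = \ker(\lin_P^\mu)$, and surjectivity of $\lin_P^\mu$ yields the exactness of the asserted sequence.

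There is no substantial analytic obstacle here: all nontrivial content sits in the already-cited results (Theorem \ref{thm:dadeiso}, Theorem \ref{thm:tw18}, the isomorphism theorem for $\alpha_P$, the commutative diagram corollary, and \cite[Theorem D]{BT}). The only step that requires a moment of care is ensuring that the extension of $\bar ?: D_\mu(P)\to \bigoplus_R D(N_P(R)/R)$ to $\mathbb{Q}$-coefficients remains an isomorphism---this is immediate because Theorem \ref{thm:dadeiso} already exhibits mutually inverse group homomorphisms $\bar ?$ and $\varrho_P$, and tensoring with $\mathbb{Q}$ over $\mathbb{Z}$ is an exact functor.
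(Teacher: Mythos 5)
Your proposal is correct and follows essentially the same route as the paper: take the direct sum over $R\in s(P)$ of the Bouc--Th\'evenaz exact sequences from \cite[Theorem~D]{BT}, and transport exactness through the two vertical isomorphisms $\alpha_P$ and $\overline{?}$ of the commutative diagram together with the $\mathbb{Q}$-extension of the isomorphism from Theorem~\ref{thm:dadeiso}. Your write-up is in fact slightly more careful than the paper's at one point: in verifying $\lin_P^\mu\circ\iota_P^\mu=0$ the paper writes $\varrho_P\circ\lin_P^\oplus\circ\iota_P^\oplus$ where the inverse of $\overline{?}$ on the Grothendieck-ring side is really meant, and your version $\overline{?}^{-1}\circ(\lin_P^\oplus\circ\iota_P^\oplus)\circ\bar{?}$ is the cleaner formulation; you also explicitly flag that the $\mathbb{Q}$-extension of $\bar{?}$ remains an isomorphism, which the paper leaves implicit.
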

 
 We finish by a result to show that the above analogy breaks for the following next step. By \cite[Statement 2.1.2]{P}, there is an isomorphism
 \[
 T(P) \cong \bigcap_{1<Q\le P}\mbox{\rm Ker}(\defl^{N_P(Q)}_{N_P(Q)/Q}\res^P_{N_P(Q)} :D(P)\to D(N_P(Q)/Q))
 \]
where we write $T(P)$ for the group of endo-trivial $kP$-modules. Our next result shows that the above analogy between 
endo-permutation Mackey functors and endo-permutation modules does not hold in this direction. Indeed, since we have defined
restriction and deflation maps for the Mackey-Dade groups, the version of the right-hand side can be evaluated for the 
Mackey-Dade groups. However, as we shall see, it is not isomorphic to the direct sum of the groups of endo-trivial 
$kN_P(Q)/Q$-modules as $Q$ runs over all subgroup of $P$ up to conjugation. In a future paper, we are aiming to study 
endotrivial Mackey functors and their relations to endo-permutation Mackey functors.
 
To describe the intersection of kernels of deflation-restriction maps for the Mackey-Dade group, we need the 
following definition of the endo-permutation module $\Delta(P)$ from \cite{BTen}. Let $M(P)$
denote the disjoint union of the $P$-sets $P/Q$ as $Q$ runs over all maximal subgroups of the finite $p$-group $P$ and let
\[
\Delta(P) = \Omega_{M(P)}
\] 
denote the corresponding relative syzygy, as defined by Bouc in \cite[Notation 6.2.1]{BTen}. Then by \cite[Remark 6.2.2]{BTen},
the class of $\res^P_R \Delta(P)$ in the Dade group $\mathbb QD(R)$ is zero when $R\neq P$ and moreover $\Delta(P)$ is $\out(P)$-invariant.
Finally, by \cite[Remark 7.4.11]{BTen}, the elements $\ten_R^P \Delta(R)$ form a basis of $\mathbb QD(P)$, as $R$ runs over
all non-cyclic subgroups of $P$. 
 
 \begin{pro}
Let $P$ be a non-cyclic $p$-group. Then
\[
\underline{\mathbb Q D_\mu} (P) := \bigcap_{R/N\squ P} \ker\big(\jef^R_{R/N}\res^P_R: \mathbb QD_\mu(P)\rightarrow 
\mathbb Q D_\mu(R/N)\big)
\]
is equal to the $\mathbb Q \out(P)$-submodule of $\mathbb QD_\mu(P)$ generated by $\varrho_P(\Delta(P))$.
\end{pro}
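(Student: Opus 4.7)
The plan is to transport the condition defining $\underline{\mathbb QD_\mu}(P)$ across the isomorphism $\varrho_P:\bigoplus_{Q\le_P P}\mathbb QD(N_P(Q)/Q)\xrightarrow{\sim}\mathbb QD_\mu(P)$ from Theorem \ref{thm:dadeiso}, reducing the statement to a purely Dade-group assertion about $\mathbb QD(P)$.

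First, I would combine Proposition \ref{pro:barres}(1) and (2) to obtain, for $N\le H\le R\le P$,
\[
\overline{\jef^R_{R/N}\res^P_R M}(H/N)\cong\res^{N_P(H)/H}_{N_R(H)/H}\bigl(\overline M(H)\bigr).
\]
Writing $[M]=\varrho_P((m_Q)_Q)$ with $m_Q=[\overline M(Q)]$, the map $\jef^R_{R/N}\res^P_R$ corresponds under $\varrho_P$ to the component-wise restriction $(m_Q)_Q\mapsto\bigl(\res^{N_P(H)/H}_{N_R(H)/H}(m_H)\bigr)_{H/N\,\squ\,R/N}$. For any subgroup $H$ with $1<H\le P$, the subquotient $(R,N)=(N_P(H),H)$ is nontrivial and satisfies $N_R(H)/H=N_P(H)/H$, so its associated map has the identity as its $H/H$-component; hence lying in every such kernel forces $m_H=0$ for every $H>1$. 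At $H=1$ the remaining constraints (arising from $N=1$ and $R\lneq P$) reduce precisely to $\res^P_R(m_1)=0$ in $\mathbb QD(R)$ for every proper $R\lneq P$.

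It thus suffices to identify $K:=\bigcap_{R\lneq P}\ker\bigl(\res^P_R:\mathbb QD(P)\to\mathbb QD(R)\bigr)$ with $\mathbb Q\cdot[\Delta(P)]$. The inclusion $\mathbb Q\cdot[\Delta(P)]\subseteq K$ is \cite[Remark 6.2.2]{BTen}. For the reverse, I would expand an arbitrary $m=\sum_S c_S[\ten^P_S\Delta(S)]$ in the Bouc basis (indexed by non-cyclic $S\le_P P$) from \cite[Remark 7.4.11]{BTen}, and apply the Mackey-type formula for restriction of a tensor induction to compute $\res^P_S\ten^P_{S'}\Delta(S')$. Using once more that $\res^S_T\Delta(S)=0$ for every proper $T<S$, this expression vanishes whenever $|S'|>|S|$ or $|S'|=|S|$ with $S'\not\sim_PS$; when $S'=S$, only the double cosets in $N_P(S)$ survive and contribute $|N_P(S)/S|\cdot[\Delta(S)]$; and when $|S'|<|S|$ one obtains only basis vectors of the form $[\ten^S_T\Delta(T)]$ with $T\lneq S$, linearly independent from $[\Delta(S)]$ in $\mathbb QD(S)$. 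Picking a proper $S$ of maximal order with $c_S\neq 0$ and reading off the coefficient of $[\Delta(S)]$ in $\res^P_Sm=0$ then forces $c_S=0$, a contradiction; hence $m\in\mathbb Q\cdot[\Delta(P)]$.

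Combining these reductions, $\underline{\mathbb QD_\mu}(P)=\varrho_P(\mathbb Q\cdot[\Delta(P)])=\mathbb Q\cdot\varrho_P([\Delta(P)])$, and since $\Delta(P)$ is $\out(P)$-invariant and $\varrho_P$ is visibly $\out(P)$-equivariant, this coincides with the $\mathbb Q\out(P)$-submodule of $\mathbb QD_\mu(P)$ generated by $\varrho_P([\Delta(P)])$. The main obstacle is the triangularity step inside $\mathbb QD(P)$: the tensor-induction Mackey formula is standard, but the bookkeeping of double cosets together with the distinction between $P$- and $S$-conjugacy of the subgroups $T={}^xS'$ appearing in the formula requires care, and I expect this linear-independence step to be the most delicate part of the argument.
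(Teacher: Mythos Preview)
Your proposal is correct and follows the same overall strategy as the paper: reduce via the bar construction and Proposition~\ref{pro:barres} to show that the components $m_Q$ at nontrivial $Q$ must vanish, then identify the remaining condition on $m_1$ with the common kernel $\bigcap_{R\lneq P}\ker(\res^P_R)$ in $\mathbb QD(P)$, and finally determine this kernel via the Bouc basis $\{\ten^P_R\Delta(R)\}$ using the Mackey formula for $\res$ of a tensor induction together with the vanishing of $\res\Delta$ to proper subgroups. The only substantive difference is in the triangularity step: the paper picks a subgroup of \emph{minimal} order with nonzero coefficient, so that all remaining terms in $\res^P_Q x$ vanish outright and one reads off $a_Q|N_P(Q):Q|\,\Delta(Q)=0$; you instead pick one of \emph{maximal} order and must additionally invoke the linear independence of $\Delta(S)$ from the elements $\ten^S_T\Delta(T)$ with $T\lneq S$ in $\mathbb QD(S)$ to isolate the coefficient $c_S$. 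Both variants are valid and equally short; the minimal-order choice is marginally cleaner because it avoids that last independence observation.
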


\begin{proof}
Let $M$ be an endo-permutation Mackey functor for $P$ and let $1\neq Q \le P$. Then by Theorem \ref{pro:barres}, we have
\[
\overline{\jef^{N_P(Q)}_{N_P(Q)/Q}\res^P_{N_P(Q)} M} (Q/Q) \cong \overline M (Q)
\]
as $kN_P(Q)/Q$-modules. This implies, in particular, that if $M$ is in $\underline{D_\mu}(P)$, then $\overline M(Q)$ is zero in 
$\mathbb QD(N_P(Q)/Q)$. In other words, $[M]\in \underline{\mathbb Q D_\mu}(P)$ only if $[\overline M (Q)] = 0$
for any $1\neq Q\le P$. 

Regarding the endo-permutation $kP$-module $M(1)$, we have
\[
(\res^P_R M)(1) = \res^P_R (M(1)).
\]
Thus $[M]\in \underline{\mathbb QD_\mu}(P)$ implies that $\res^P_R [M(1)] = 0$ in $\mathbb QD(R)$ for any $R< P$.
Since the proper subgroups $R$ of $P$ are the only subquotients of $P$ for which the action of an $(R,P)$-biset contribute to 
 $M(1)$, we have that $[M]\in \underline{\mathbb QD_\mu}(P)$ if and only if
\begin{enumerate}
\item[(i)] $[\overline M(Q)]= 0$ in $\mathbb QD(N_P(Q)/Q)$ for each non-trivial subgroup $1\neq Q\le P$ and
\item[(ii)] $\res^P_R[\overline M(1)] = 0$ in $\mathbb QD(R)$ for each proper subgroup $R< P$.
\end{enumerate}
Therefore, to determine the subgroup $\underline{\mathbb QD_\mu}(P)$, we only need to calculate the subgroup
\[
\underline{ \mathbb Q D}(P) = \bigcap_{R< P} \ker\big( \res^P_R : \mathbb Q D(P) \rightarrow \mathbb Q D(R) \big).
\]
We claim that this subgroup is generated by the endo-permutation $kP$-module $\Delta(P)$. Indeed, let $x\in 
\underline{\mathbb Q D}(P)$. Since the elements $\ten_R^P \Delta(R)$ form a basis of $\mathbb Q D(P)$, we can write
\[
x = \sum_{R\le_P P} a_R\ten_R^P \Delta(R)
\]
for some $a_R\in \mathbb Q$. We want to show that all the coefficients $a_Q$ are zero except possibly for $a_P$. To prove this
claim, note that, for a fixed subgroup $R\le P$, and a subgroup $Q< P$, the Mackey formula implies
\begin{eqnarray*}
\res^P_Q\ten_R^P \Delta(R) &=& \sum_{x\in Q\backslash P/R}\ten^{Q}_{Q\cap {}^xR}\res^{{}^xR}_{Q\cap {}^xR} \Delta({}^xR)\\
&=& \sum_{\substack{x\in Q\backslash P/R\\ {}^xR\le Q}}\ten^{Q}_{{}^xR} \Delta({}^xR).
\end{eqnarray*}
Here we also use that the restriction of $\Delta(R)$ to any proper subgroup of $R$ is zero. In particular, if the order of $Q$ is less
than or equal to the order of $R$ (and if $Q$ is not $P$-conjugate to $R$, when they have equal orders), then the condition 
${}^xR\le Q$ is not satisfied for any $x\in P$ and we get
\[
\res^P_Q\ten^P_R \Delta(R) = 0.
\]
Now, let $Q\le P$ be a subgroup of minimal order such that the coefficient $a_Q\neq 0$ and suppose that $Q\neq P$. Then by
the above calculation and by the choice of $x$, we have
\begin{eqnarray*}
0 = \res^P_Q x &=& \sum_{R\le_P P}a_R\res^P_Q\ten_R^P \Delta(R)\\
&=& a_Q \res^P_Q\ten_Q^P \Delta(Q)\\
&=& a_Q |N_P(Q):Q| \Delta(Q).
\end{eqnarray*}
Therefore, we get that $a_Q =0$, a contradiction. Hence we have $Q=P$, and $x = a_P\Delta(P)$ for some $a_P\in \mathbb Q$.   

Finally, with the above identification of $\underline{\mathbb QD_\mu}(P)$, we get that
\[
\underline{\mathbb QD_\mu}(P) \cong \langle\varrho_P(\Delta(P))\rangle
\]
where $\varrho_P$ is as defined in Section \ref{sec:dd}. This isomorphism is an isomorphism of $\mathbb Q\out(P)$-modules
since $\Delta(P)$ is $\out(P)$-invariant.
\end{proof}

\section*{Acknowledgement}
I would like to thank to the referee for his/her valuable comments ard corrections. 

The author is supported by Boğaziçi University Research Fund Grant Number 15B06P1.
\vspace{.2in}


\begin{thebibliography}{9}

\bibitem{BGreen}
{S.~Bouc}, `Green functors and $G$-sets', Lecture Notes in Mathematics, vol. 1671 Springer-Verlag, Berlin, (1997).
%
\bibitem{Bproj}
S.~Bouc, `Résolutions de foncteurs de Mackey', in: Group representations: Cohomology, Group Actions and
Topology, AMS Proc. Symp. Pure Math. 63 (1998).
%
\bibitem{BTen}
{S.~Bouc}, `Tensor induction of relative szyzgies', {\em J. Reine Angew. Math. } 523 (2000) 113-171.
%
\bibitem{BNon}
{S.~Bouc}, `Non-additive exact functors and tensor induction for Mackey functors', {\em AMS Memoirs} (2000) 144-683 .
%
\bibitem{BT}
{S.~Bouc, J. Th\'evenaz},  `The group of endo-permutation modules', {\em Invent. Math. } 139 (2000) 275-349.
%
\bibitem{BDade}
{S.~Bouc}, `The Dade group of a $p$-group',  {\em Invent. Math.} 164 (2006), 189-231. 
%
\bibitem{B96}
{S.~Bouc}, `Biset functors for finite groups', Lecture Notes in Mathematics, vol. 1990 Springer-Verlag, Berlin (2010).
%
\bibitem{Bsec}
{S.~Bouc}, `The slice Burnside ring and the section Burnside ring of a finite group', {\em Compositio Math. } 148 (2012) 
868-906.
%
\bibitem{C1}
{O.~Coşkun}, `Mackey functors, induction from restriction functors and coinduction from transfer functors', 
{\em J. Algebra} 315 (2007), 224-248.
%
\bibitem{Clin}
{O.~Coşkun}, `Ring of subquotients of a finite group I: Linearization', {\em J. Algebra} 322 (2009) 2773 - 2792.
%
\bibitem{D1}
{E.~Dade}, `Endo-permutation modules over $p$-groups I, II', {\em Annals of Mathematics} 107 (1978) 459-494, 317-346.
%
\bibitem{Dr}
{A.~Dress}, `Contributions to the theory of induced representations', Lecture Notes in Mathematics, vol. 342, pp. 183-240, Springer, (1973).
%
\bibitem{L}
{C.~Lassueur}, `The Dade group of a finite group', {\em J. Pure and Appl. Algebra} 217 (2013) 97-113.
%
\bibitem{LM}
{M.~Linckelmann, N.~Mazza}, `The Dade group of a fusion system', {\em J. Group Theory} 12 (2009) 55-74.
%
\bibitem{P}
{L.~Puig}, `Affirmative answer to a question of Feit', {\em J. Algebra } 131 (1990), 513-526. 
%
\bibitem{S}
{H.~Sasaki}, `Green correspondence and transfer theorems of Wielandt type for $G$-functors', {\em J. Algebra } 79 (1982), 98-120. 
 %
\bibitem{T}
{J. Thévenaz}, `Some remarks on $G$-functors and the Brauer morphisms', {\em J. Reine Angew. Math. } 384 (1988), 24-56. 
%
\bibitem{TW}
{J. Thévenaz, P. Webb}, `The structure of Mackey functors', {\em Trans. Amer. Math. Soc. } 347 (1995), 1865-1961. 
%
\bibitem{TW2}
{J. Thévenaz, P. Webb}, `Simple Mackey functors', Proceedings of the Second International Group Theory Conference (Bressanone, 1989). {\em Rend. Circ. Mat. Palermo (2)} Suppl. No. 23 (1990) 299-319.
%
\bibitem{U}
{J.M.~Urfer}, `Endo-p-permutation modules', {\em J. Algebra} 316 (2007) 206-223.
%
\bibitem{W}
{P.~Webb}, `Stratifications and Mackey functors II: globally defined Mackey functors', {\em J. K-Theory} 6 (2010) 99-170.
%
\end{thebibliography}
\end{document}